\documentclass[12pt, a4paper, draft]{amsart}

\usepackage{fullpage}
\usepackage{amsmath,amssymb, amsthm}
\usepackage{xypic}
\usepackage[utf8]{inputenc}

\usepackage{color}
\definecolor{webcolor}{rgb}{0,0,1}
\definecolor{webbrown}{rgb}{.6,0,0}
\usepackage[
        colorlinks,
        linkcolor=webbrown, filecolor=webbrown,  citecolor=webbrown, 
        pdfauthor={},
  pdftitle={},
]{hyperref}
\usepackage{enumitem}

\newcommand{\N}{\mathbf{N}}
\newcommand{\Z}{\mathbf{Z}} 
\newcommand{\Q}{\mathbf{Q}}

\newcommand{\PP}{\mathbf{P}}
\DeclareMathOperator{\Spec}{Spec}

\DeclareMathOperator{\GCD}{GCD}

\theoremstyle{definition}
\newtheorem{theorem}{Theorem}[section]
\newtheorem{algorithm}[theorem]{Algorithm}
\newtheorem{definition}[theorem]{Definition}
\newtheorem{lemma}[theorem]{Lemma}
\newtheorem{prop}[theorem]{Proposition}
\newtheorem{corollary}[theorem]{Corollary}

\newtheorem{example}[theorem]{Example}
\newtheorem{remark}[theorem]{Remark}

\newtheorem*{theorem*}{Theorem}
\newtheorem*{notation*}{Notation}

\newcommand{\lcm}{\ensuremath{\mathrm{lcm}}}

\DeclareMathOperator{\Syz}{Syz}
\DeclareMathOperator{\LM}{lm}
\DeclareMathOperator{\LC}{lc}
\DeclareMathOperator{\LT}{lt}
\DeclareMathOperator{\Lt}{Lt}
\DeclareMathOperator{\spoly}{spoly}

\title[Gröbner bases over Dedekind domains]{On Gröbner bases over Dedekind domains}

\author{Tommy Hofmann}
\address{Tommy Hofmann\\
Fachbereich Mathematik\\
Technische Universität Kaiserslautern\\
67663 Kaiserslautern\\
Germany}
\email{thofmann@mathematik.uni-kl.de}
\urladdr{http://www.mathematik.uni-kl.de/$\sim$thofmann}

\subjclass[2010]{13P10}
\keywords{}
\date{\today}

\begin{document}

\begin{abstract}
  Gröbner bases are a fundamental tool when studying ideals in multivariate
  polynomial rings. More recently there has been a growing interest in
  transferring techniques from the field case to other
  coefficient rings, most notably Euclidean domains and principal ideal rings.
  In this paper we will consider multivariate polynomial rings over Dedekind
  domain. By generalizing methods from the theory of finitely generated
  projective modules, we show that it is possible to describe Gröbner bases
  over Dedekind domains in a way similar to the case of principal ideal
  domains, both from a theoretical and algorithmic point of view.
\end{abstract}

\maketitle

\section{Introduction}

The theory of Gröbner bases, initiated by Buchberger~\cite{Buchberger1965} plays an
important role not only in mathematical disciplines like algorithmic commutative
algebra and algebraic geometry, but also in related areas of science and
engineering.  Although the original approach of Buchberger was restricted to
multivariate polynomials with coefficients in a field,
Trinks~\cite{Trinks1978} and Zacharias~\cite{Zacharias1978} showed that by
generalizing the notions of $S$-polynomials and reduction, Gröbner bases can
also be constructed in the ring case.  For coefficient rings that are principal
ideal domains, the approach to constructing Gröbner bases is very close to the
field case has attracted a lot of attention, see for example \cite{Pauer1988, KandriRody1988, Moller1988, Pan1989}, also \cite[Chapter 4]{Adams1994} or \cite[Chapter 10]{Becker1993}.

In this paper, we will investigate Gröbner bases over Dedekind domains, that
is, over integral domains which are locally discrete valuation rings.  Despite
the prominent role of Dedekind domains as coefficient rings for example in
arithmetic geometry, not much is known in connection with the construction of
Gröbner bases. Our aim is to show that it is possible to improve upon the
generic algorithms for Noetherian domains. In particular, we will show that using
the notions of pseudo-polynomials and pseudo-Gröbner bases the approach comes
very close to that of principal ideal domains.

The idea of using so called pseudo-objects to interpolate between principal
ideal domains and Dedekind domains has already been successfully applied to the
theory of finitely generated projective modules.  Recall that over a principal
ideal domain such modules are in fact free of finite rank. By using the Hermite
and Smith form, working with such modules is as easy as working with finite dimensional
vector spaces over a field. If the ring is merely a Dedekind domain, such
modules are in general not free, rendering the Hermite and Smith form useless. But
since the work of Steinitz~\cite{Steinitz1911, Steinitz1912} it has been known that these modules
are direct sums of projective submodules of rank one. In \cite{Cohen1996} (see
also \cite{Cohen2000}), based upon ideas already present in
\cite{Bosma1991}, a theory of pseudo-elements has been developed, which
enables an algorithmic treatment of this class of modules very close to the
case of principal ideal domains. In particular, a generalized Hermite form
algorithm is described, which allows for similar improvements as the classical
Hermite form algorithm in the principal ideal case, see also \cite{Biasse2017, Fieker2014}.

Now---in contrast to the setting of finitely generated projected modules just described---Gröbner bases do exist if the coefficient ring is a
Dedekind domain.
In~\cite{Adams1997} using a generalized version of Gröbner basis, the structure of ideals in univariate polynomial rings over Dedekind domains is studied.
Apart from that, nothing is published on how to exploit the structure of Dedekind
domains in the algorithmic study of ideals in multivariate polynomial rings.
Building upon the notion of
pseudo-objects, in this paper we will introduce pseudo-Gröbner bases, that will
interpolate more smoothly between the theory of Gröbner bases for Dedekind
domains and principal ideal domains.
Of course the hope is that one can apply more
sophisticated techniques from principal ideal domains to Dedekind domains, for example, signature-based algorithms as introduced in \cite{Eder2017}.
As an illustration of this idea, we prove a simple generalization of the product criterion for pseudo-polynomials.
We will also show how to use the pseudo-Gröbner basis to solve basic tasks from algorithmic commutative algebra,
including the computation of primes of bad reduction.

The paper is organized as follows.
In Section~2 we recall standard notions from multivariate polynomials and translate
them to the context of pseudo-polynomials.
This is followed by a generalization of Gröbner bases in Section~3, where we
present various characterizations of the so called pseudo-Gröbner bases.
In Section~4 by analyzing syzygies of pseudo-polynomials, we prove a variation of Buchberger's criterion.
As a result we obtain a simple to formulate algorithm for computing Gröbner bases.
We also use this syzygy-based approach to prove the generalized product criterion.
In Section~5 we consider the situation over a ring of integers of a number
field and address the omnipresent problem of quickly growing coefficients by
employing classical tools from algorithmic number theory.
In the final section we give some applications to classical problems in algorithmic
commutative algebra and the computation of primes of bad reduction.

\subsection*{Acknowledgments}
The author was supported by Project II.2 of SFB-TRR 195 `Symbolic Tools in
Mathematics and their Application'
of the German Research Foundation (DFG).

\begin{notation*}
Throughout this paper, we will use $R$ to denote a Dedekind domain, that is, a Noetherian integrally closed domain of Krull dimension one, and $K$ to denote its total ring of fractions.
  Furthermore, we fix a multivariate ring $R[x] = R[x_1,\dotsc,x_n]$ and a monomial ordering $<$ on $R[x]$.
\end{notation*}

\section{Pseudo-elements and pseudo-polynomials}

In this section we recall basic notions from multivariate polynomials and
generalize them in the context of pseudo-polynomials over Dedekind domains.

\subsection{Multivariate polynomials.}

For $\alpha = (\alpha_1,\dotsc,\alpha_n) \in \N^n$, we denote by $x^\alpha$ the
monomial $x_1^{\alpha_1}\dotsm x_n^{\alpha_n}$. We call $\alpha$ the \textit{degree} of $f$ and denote it by $\deg(f)$.
A polynomial $f = c x^\alpha$ with $c \in R$ and $\alpha \in \N^n$ is called a \textit{term}.
For an arbitrary multivariate polynomial $f \in \sum_{\alpha \in \N^n} c_\alpha x^\alpha$ we denote by
$\deg(f) = \max_>\{ \alpha \in \N^n \mid c_\alpha \neq 0\}$ the \textit{degree} of $f$, by $\LM(f) = x^{\deg(f)}$ the \textit{leading monomial}, by $\LC(f) = c_{\deg(\alpha)}$ the \textit{leading coefficient} and by $\LT(f) = c_{\deg(f)}x^{\deg(f)}$ the \textit{leading term} of $f$.

\subsection{Pseudo-elements and pseudo-polynomials.}

A fractional ideal of $R$ is a non-zero finitely generated $R$-submodule of $K$.
Let now $V$ be a vector space over $K$ and $M$ an $R$-submodule of $V$ such that $KM = V$, that is, $M$ contains a $K$-basis of $V$.
Given a fractional ideal $\mathfrak a$ of $R$ and an element $v \in V$ we denote
by $\mathfrak av$ the set $\{ \alpha v \mid \alpha \in \mathfrak a \} \subseteq V$,
which is in fact an $R$-submodule of $V$.

\begin{definition}
A pair $(v, \mathfrak a)$ consisting of an element $v \in V$ and a fractional ideal $\mathfrak a$ of $R$ is called
  a \textit{pseudo-element} of $V$. In case  $\mathfrak a v \subseteq M$, we call $(v, \mathfrak a)$ a \textit{pseudo-element} of $M$.
\end{definition}

\begin{remark}
  The notion of pseudo-objects goes back to Cohen~\cite{Cohen1996}, who introduced them to compute with finitely generated projective modules over Dedekind domains.
  Note that in~\cite{Cohen2000} the $R$-submodule $\mathfrak a v$ itself is
  defined to be a pseudo-element, whereas with our definition, this
  $R$-submodule is only attached to the pseudo-element $(v, \mathfrak a)$.
  We choose the slightly modified version to simplify the exposition and to ease notation.
\end{remark}

\begin{lemma}\label{lem:lem1}
  Let $V$ be a $K$-vector space.
  \begin{enumerate}
    \item
      For $v, w \in V$ and $\mathfrak a, \mathfrak b, \mathfrak c$ fractional ideals of $R$ we have
      $\mathfrak a(\mathfrak b v) = (\mathfrak a \mathfrak b)v$ and $\mathfrak c(\mathfrak av + \mathfrak bw) = (\mathfrak c \mathfrak a)v + (\mathfrak c \mathfrak b)vw$.
    \item
      Let $(v, \mathfrak a)$, $(v_i, \mathfrak a_i)_{1 \leq i \leq l}$ be
      pseudo-elements of $V$. If $\mathfrak av \subseteq \sum_{1 \leq i
      \leq l} \mathfrak a_i v_i$, then there
      exist $a_i \in \mathfrak a_i\mathfrak a^{-1}$, $1 \leq i \leq l$, such that
      $v = \sum_{1 \leq i \leq l} a_i v_i$.
  \end{enumerate}
\end{lemma}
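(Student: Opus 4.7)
The plan is to prove part (i) directly from the definition of the action of a fractional ideal on an element (and then on a submodule) as the $R$-submodule generated by products; and then to deduce part (ii) as a one-line consequence of part (i) together with the key Dedekind-domain fact that every nonzero fractional ideal is invertible, i.e.\ $\mathfrak a \mathfrak a^{-1} = R$.

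For part (i), I would unpack $\mathfrak a(\mathfrak b v)$ as the set of finite sums $\sum_j \alpha_j x_j$ with $\alpha_j \in \mathfrak a$ and $x_j \in \mathfrak b v$, write $x_j = \beta_j v$ with $\beta_j \in \mathfrak b$, and rewrite each summand as $(\alpha_j \beta_j) v$ to land inside $(\mathfrak a \mathfrak b) v$. The reverse inclusion is the same calculation: an arbitrary element of $(\mathfrak a \mathfrak b) v$ has the form $(\sum_j \alpha_j \beta_j) v = \sum_j \alpha_j (\beta_j v)$, which lies in $\mathfrak a(\mathfrak b v)$. For the distributive identity, I would first note that for any $R$-submodules $N_1, N_2$ of $V$ one has $\mathfrak c(N_1 + N_2) = \mathfrak c N_1 + \mathfrak c N_2$ (trivial by splitting a generator of $N_1 + N_2$), then apply this with $N_1 = \mathfrak a v$, $N_2 = \mathfrak b w$, and conclude using the first identity that $\mathfrak c(\mathfrak a v) = (\mathfrak c \mathfrak a) v$ and $\mathfrak c(\mathfrak b w) = (\mathfrak c \mathfrak b) w$.

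For part (ii), the main observation is that $R$ is a Dedekind domain, so the nonzero fractional ideal $\mathfrak a$ satisfies $\mathfrak a^{-1} \mathfrak a = R$. Multiplying the hypothesis $\mathfrak a v \subseteq \sum_i \mathfrak a_i v_i$ by $\mathfrak a^{-1}$ (which preserves inclusions of $R$-submodules of $V$, by the same generator-wise argument as above) and then invoking part (i) to push $\mathfrak a^{-1}$ past the sum and past each $\mathfrak a_i$, one obtains
\begin{equation*}
Rv = (\mathfrak a^{-1}\mathfrak a)v = \mathfrak a^{-1}(\mathfrak a v) \subseteq \mathfrak a^{-1}\Bigl(\sum_i \mathfrak a_i v_i\Bigr) = \sum_i (\mathfrak a_i \mathfrak a^{-1}) v_i.
\end{equation*}
Since $v \in Rv$, this exhibits $v$ as a finite sum $v = \sum_i a_i v_i$ with $a_i \in \mathfrak a_i \mathfrak a^{-1}$, as required.

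I do not expect any serious obstacle; the only place where the Dedekind hypothesis is essential is the invertibility $\mathfrak a \mathfrak a^{-1} = R$ used in part (ii). Note that the $v_i$ are \emph{not} assumed to be linearly independent, so one should not try to extract the coefficients $a_i$ by solving a linear system; the cleanest route really is the ideal-multiplication argument above, which makes no uniqueness claim about the $a_i$.
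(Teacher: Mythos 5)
Your proof is correct and follows essentially the same route as the paper: part (i) is a direct unpacking of the definitions, and part (ii) is obtained by multiplying the hypothesis by $\mathfrak a^{-1}$, using invertibility of fractional ideals in a Dedekind domain to reduce to the case $\mathfrak a = R$. Your write-up just makes explicit the steps the paper declares ``clear'' (and silently corrects the typo $(\mathfrak c\mathfrak b)vw$ in the statement to $(\mathfrak c\mathfrak b)w$).
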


\begin{proof}
  (i): Clear. (ii): Using (i) and by multiplying with $\mathfrak a^{-1}$ we are reduced to the case where $\mathfrak a = R$, that is,
  $v \in \sum_{1 \leq i \leq l} \mathfrak a_i v_i$. But then the assertion is clear.
\end{proof}

We will now specialize to the situation of multivariate polynomial rings, where additionally we have the $R[x]$-module structure.
For a fractional ideal $\mathfrak a$ of $R$ we will denote by $\mathfrak a[x]$ the ideal $\{ \sum_{\alpha} c_\alpha x^\alpha \mid c_\alpha \in \mathfrak a\}$ of $R[x]$.

\begin{lemma}\label{lem:rep}
The following hold:
  \begin{enumerate}
    \item
      For fractional ideals $\mathfrak a, \mathfrak b, \mathfrak c$ of $R$ we have $\mathfrak a(\mathfrak b[x]) = (\mathfrak a \mathfrak b)[x]$ and $\mathfrak a(\mathfrak b[x] + \mathfrak c[x]) = (\mathfrak a\mathfrak b)[x] + (\mathfrak a \mathfrak c)[x]$.
    \item
      If $M$ is an $R[x]$-module and $(v, \mathfrak a)$ a pseudo-element, then $\langle \mathfrak a v \rangle_{R[x]} = \mathfrak a[x]v$.
    \item
      Let $M$ be an $R[x]$-module and $(v_i,\mathfrak a_i)_{1 \leq i \leq l}$ pseudo-elements of $M$ with $\langle \mathfrak a_i v_i \mid 1 \leq i \leq l\rangle_{R[x]}  = M$. Given a pseudo-element $(v, \mathfrak a)$ of $M$, there exist $f_i \in \mathfrak a_i \mathfrak a^{-1}[x]$, $1 \leq i \leq l$, such that $v = \sum_{1 \leq i \leq l} f_i v_i$.
  \end{enumerate}
\end{lemma}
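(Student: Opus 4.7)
For part (i), I would unwind the definitions. A generator of $\mathfrak{a}(\mathfrak{b}[x])$ has the form $a p$ with $a \in \mathfrak{a}$ and $p = \sum_\alpha b_\alpha x^\alpha \in \mathfrak{b}[x]$; rewriting as $\sum_\alpha (a b_\alpha) x^\alpha$ puts it into $(\mathfrak{ab})[x]$ since $ab_\alpha \in \mathfrak{ab}$. For the reverse inclusion, any coefficient $c_\alpha \in \mathfrak{ab}$ is by definition a finite sum $\sum_k a_{\alpha,k}b_{\alpha,k}$, and distributing $x^\alpha$ across such a representation exhibits $c_\alpha x^\alpha$ as an element of $\mathfrak{a}(\mathfrak{b}[x])$. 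The second identity of (i) is then just bilinearity.

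For (ii) I would again do a double inclusion. If $f = \sum_\alpha c_\alpha x^\alpha \in \mathfrak{a}[x]$, then $fv = \sum_\alpha x^\alpha(c_\alpha v)$ expresses $fv$ as an $R[x]$-combination of elements of $\mathfrak{a}v$, giving $\mathfrak{a}[x]v \subseteq \langle \mathfrak{a}v\rangle_{R[x]}$. Conversely, every element of $\langle \mathfrak{a}v\rangle_{R[x]}$ is a finite sum $\sum_i p_i(a_iv)$ with $p_i \in R[x]$ and $a_i \in \mathfrak{a}$; collecting this into $\bigl(\sum_i a_i p_i\bigr)v$ and noting $\sum_i a_i p_i \in \mathfrak{a}[x]$ gives the other inclusion.

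For (iii), the plan is to imitate the argument of Lemma~\ref{lem:lem1}(ii), using parts (i) and (ii) to handle the $R[x]$-structure. First, by (ii) the hypothesis rewrites as $M = \sum_{1 \le i \le l} \mathfrak{a}_i[x]v_i$. Since $(v,\mathfrak{a})$ is a pseudo-element of $M$, we have $\mathfrak{a}v \subseteq M$, so multiplying by $\mathfrak{a}^{-1}$ yields
\[
v \in Rv = \mathfrak{a}^{-1}\mathfrak{a}v \subseteq \mathfrak{a}^{-1}\sum_i \mathfrak{a}_i[x]v_i = \sum_i (\mathfrak{a}_i \mathfrak{a}^{-1})[x]\, v_i,
\]
where the last equality uses part (i) to move $\mathfrak{a}^{-1}$ inside the bracket. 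Reading off an expression for $v$ from a membership in the right-hand side delivers the desired $f_i \in (\mathfrak{a}_i\mathfrak{a}^{-1})[x]$.

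The only mildly delicate point is making sure that the two possible meanings of expressions like $\mathfrak{a}^{-1}(\mathfrak{a}_i[x]v_i)$ — either acting ideal-by-ideal on the coefficients, or collapsing $\mathfrak{a}_i[x]v_i$ into a single $R$-submodule of $V$ and then scaling by $\mathfrak{a}^{-1}$ — coincide. Both (i) and (ii), once proved, are exactly what rule this out as an issue, so the reduction in (iii) then follows without further arithmetic; there is no genuine obstacle beyond this bookkeeping.
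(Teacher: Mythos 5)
Your proposal is correct and follows the same route the paper intends: part (i) by distributivity of ideal multiplication, part (ii) by a direct double inclusion, and part (iii) by the same reduction used in Lemma~\ref{lem:lem1}(ii), namely multiplying by $\mathfrak a^{-1}$ to reduce to $\mathfrak a = R$ and then reading off the coefficients, with (i) and (ii) justifying the bookkeeping. The paper's own proof merely asserts these steps, so your write-up is simply a fully detailed version of it.
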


\begin{proof}
  Item~(i) follows from the distributive properties of ideal multiplication. Proving (ii), (iii) is analogous to Lemma~\ref{lem:lem1}.
\end{proof}

\begin{definition}
  A \textit{pseudo-polynomial} of $R[x]$ is a pseudo-element of $R[x]$, that is, a pair $(f, \mathfrak f)$ consisting of a polynomial $f \in K[x]$ and a fractional ideal $\mathfrak f$ of $R$ such that $\mathfrak f \cdot f \subseteq R[x]$.
  We call $\mathfrak f \LC(f) \subseteq R$ the \textit{leading coefficient} of $(f, \mathfrak f)$ and denote it by $\LC(f, \mathfrak f)$.
  The set $\mathfrak f[x] f \subseteq R[x]$ is called the \textit{ideal generated} by $(f, \mathfrak f)$ and is denoted by $\langle(f, \mathfrak f)\rangle$.
  We say that the pseudo-polynomial $(f, \mathfrak f)$ is \textit{zero}, if $f = 0$.
\end{definition}

\begin{lemma}
  Let $(f, \mathfrak f)$ be a pseudo-polynomial of $R[x]$. Then the following hold:
  \begin{enumerate}
    \item
      The leading coefficient $\LC(f, \mathfrak f)$ is an integral ideal of $R$.
    \item
      We have $\langle \mathfrak f f \rangle_{R[x]} = \mathfrak f[x] f$.
  \end{enumerate}
\end{lemma}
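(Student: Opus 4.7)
My plan is to dispatch both parts directly from the definitions and from the preceding Lemma~\ref{lem:rep}, since the statement is essentially a re-packaging of that lemma in the notation of pseudo-polynomials.

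For (i), the plan is to unpack the definition of pseudo-polynomial. By definition $\mathfrak f f \subseteq R[x]$, so for every $a \in \mathfrak f$ one has $af \in R[x]$. Taking leading coefficients yields $a\,\LC(f) = \LC(af) \in R$, hence $\mathfrak f \LC(f) \subseteq R$. To conclude it is an integral ideal, I note that $\mathfrak f \LC(f)$ is an $R$-submodule of $K$, and since $\mathfrak f$ is finitely generated as a fractional ideal, multiplying its generators by $\LC(f)$ shows $\mathfrak f \LC(f)$ is finitely generated as well. Being a finitely generated $R$-submodule of $K$ contained in $R$, it is an integral ideal of $R$.

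For (ii), the equality $\langle \mathfrak f f \rangle_{R[x]} = \mathfrak f[x] f$ is exactly an instance of Lemma~\ref{lem:rep}(ii), applied to the $R[x]$-module $M = R[x]$ with the pseudo-element $(v, \mathfrak a) = (f, \mathfrak f)$; this is a pseudo-element of $M$ precisely because the definition of pseudo-polynomial enforces $\mathfrak f f \subseteq R[x]$. The inclusion $\supseteq$ is transparent, since $\mathfrak f[x] f$ is an $R[x]$-submodule of $R[x]$ containing every $af$ with $a \in \mathfrak f \subseteq \mathfrak f[x]$, and the nontrivial inclusion $\subseteq$ is already the content of Lemma~\ref{lem:rep}(ii).

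I do not expect any genuine obstacle here: the substantive work was carried out in Lemmas~\ref{lem:lem1} and~\ref{lem:rep}, and the present statement merely records the two immediate consequences of those results in the pseudo-polynomial setting.
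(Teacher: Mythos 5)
Your proof is correct and matches the paper, whose own proof of this lemma is simply ``Clear'': part (i) follows by applying leading coefficients to $af \in R[x]$ for $a \in \mathfrak f$ together with finite generation of $\mathfrak f$, and part (ii) is exactly Lemma~\ref{lem:rep}(ii) specialized to $M = R[x]$. No gaps.
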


\begin{proof}
  Clear.
\end{proof}


\section{Reduction and pseudo-Gröbner bases}

At the heart of the construction of Gröbner bases lies a generalization of the Euclidean division in univariate polynomial rings.
In the context of pseudo-polynomials this takes the following form.

\begin{definition}[Reduction]
  Let $(f, \mathfrak f)$ and $G = \{ (g_i, \mathfrak g_i) \mid 1 \leq i \leq l\}$ be set of non-zero pseudo-polynomials of $R[x]$ and $J = \{ 1 \leq i \leq l \mid \LM(g_i) \text{ divides } \LM(f) \}$.
  We say that $(f, \mathfrak f)$ \textit{can be reduced modulo} $G$ if $\LC(f, \mathfrak f) \subseteq \sum_{i \in J} \LC(g_i,
\mathfrak g_i)$.
  In case $G = \{(g, \mathfrak g)\}$ consists of a single pseudo-polynomial, we say that $(f, \mathfrak f)$ \textit{can be reduced modulo} $(g, \mathfrak g)$.
  We define $(f, \mathfrak f)$ to be \textit{minimal with respect to $G$}, if it cannot be reduced modulo $G$.
\end{definition}

\begin{lemma}\label{lem:red}
  Let $(f, \mathfrak f)$ and $G =  \{ (g_i, \mathfrak g_i) \mid 1 \leq i \leq l\}$ be
  non-zero pseudo-polynomials of $R[x]$ and $J = \{ 1 \leq i \leq l \mid \LM(g_i) \text{ divides }
  \LM(f) \}$. Then $(f, \mathfrak f)$ can be reduced modulo $G$ if and only if there exist
  $a_i \in \mathfrak g_i \mathfrak f^{-1}$, $i \in J$, such that $\LC(f) =
  \sum_{i \in J} a_i \LC(g_i)$.
\end{lemma}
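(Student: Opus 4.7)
The plan is to reduce the statement to Lemma~\ref{lem:lem1}(ii) applied to the one-dimensional $K$-vector space $V = K$, viewing the leading coefficients $\LC(f)$ and $\LC(g_i)$ simply as non-zero elements of $K$. The essential observation is that the containment appearing in the definition of reducibility is precisely the hypothesis of Lemma~\ref{lem:lem1}(ii) in this setting.

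First I would unfold the definition. By the definition of the leading coefficient of a pseudo-polynomial, $(f, \mathfrak f)$ can be reduced modulo $G$ means
\[ \mathfrak f \LC(f) = \LC(f, \mathfrak f) \subseteq \sum_{i \in J} \LC(g_i, \mathfrak g_i) = \sum_{i \in J} \mathfrak g_i \LC(g_i) \]
as fractional ideals of $R$.

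For the forward direction, I would apply Lemma~\ref{lem:lem1}(ii) with $V = K$, taking the pseudo-element $(v, \mathfrak a) = (\LC(f), \mathfrak f)$ and the pseudo-elements $(v_i, \mathfrak a_i) = (\LC(g_i), \mathfrak g_i)$ for $i \in J$. The displayed containment is exactly the hypothesis $\mathfrak a v \subseteq \sum \mathfrak a_i v_i$ of that lemma, so it yields elements $a_i \in \mathfrak g_i \mathfrak f^{-1}$ with $\LC(f) = \sum_{i \in J} a_i \LC(g_i)$. For the reverse direction, given such a representation, I would multiply by the ideal $\mathfrak f$ and use $\mathfrak f a_i \subseteq \mathfrak f \mathfrak g_i \mathfrak f^{-1} = \mathfrak g_i$ to obtain
\[ \mathfrak f \LC(f) \;=\; \sum_{i \in J} (\mathfrak f a_i) \LC(g_i) \;\subseteq\; \sum_{i \in J} \mathfrak g_i \LC(g_i), \]
which is the required containment.

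There is no real obstacle here: once one has recognized that the condition ``$(f, \mathfrak f)$ can be reduced modulo $G$'' is an ideal containment in $K$ that matches the hypothesis of the general representation lemma, the two directions are immediate. The proof is essentially a translation between the ideal-theoretic and element-theoretic formulations of the same statement.
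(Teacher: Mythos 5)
Your proof is correct and follows essentially the same route as the paper: the paper carries out the forward direction by hand (multiplying by $\mathfrak f^{-1}$ and decomposing $\LC(f)$ as a sum of elements of $\mathfrak g_i\mathfrak f^{-1}\LC(g_i)$), which is exactly the content of Lemma~\ref{lem:lem1}(ii) that you invoke, and the reverse directions are identical. Citing the lemma rather than redoing the computation is a harmless, arguably cleaner, packaging of the same argument.
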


\begin{proof} Set $\mathfrak c = \sum_{i \in J} \LC(g_i, \mathfrak g_i)$.
  First assume that $(f, \mathfrak f)$ can be reduced modulo $G$, that is $\LC(f, \mathfrak f) = \mathfrak f \LC(f) \subseteq \mathfrak c$.
  Hence $\LC(f) \in \mathfrak c \mathfrak f^{-1} = \sum_{i \in J} \LC(g_i,\mathfrak
  g_i) \mathfrak f^{-1}$ and there exist $b_i \in \mathfrak g_i
  \mathfrak f^{-1}\LC(g_i)$, $i \in J$, such that $\LC(f) = \sum_{i \in J}
  b_i$. Then the elements $a_i = b_i/\LC(g_i) \in \mathfrak g_i \mathfrak
  f^{-1}$, $i \in J$, satisfy the claim.

  On the other hand, if $\LC(f) = \sum_{i \in J} \alpha \LC(g_i)$ for $a_i \in \mathfrak g_i \mathfrak f^{-1}$, then
  \[ \LC(f, \mathfrak f) = \mathfrak f \LC(f) \subseteq \sum_{i \in J} \mathfrak f a_i \LC(g_i) \subseteq \sum_{i \in J} \LC(g, \mathfrak g_i). \qedhere \]
\end{proof}

\begin{lemma}\label{lem:div}
  Let $(f, \mathfrak f)$ and $(g, \mathfrak g)$ be two non-zero pseudo-polynomials of $R[x]$.
  Then the following are equivalent:
  \begin{enumerate}
    \item
      $(f, \mathfrak f)$ can be reduced modulo $(g, \mathfrak g)$.
    \item
      $\mathfrak f[x] \LT(f) \subseteq \mathfrak g[x] \LT(g)$,
    \item
      $\mathfrak f \LC(f) \subseteq \mathfrak g \LC(g)$ and $\LM(f)$ divides $\LM(g)$.
  \end{enumerate}
\end{lemma}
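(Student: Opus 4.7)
The plan is to go around the triangle by showing (i) $\Leftrightarrow$ (iii) directly from the definition of reduction, and (ii) $\Leftrightarrow$ (iii) by analyzing supports of elements in the monomial ideals $\mathfrak f[x] \LT(f)$ and $\mathfrak g[x] \LT(g)$. (I note that condition (iii) as stated should read ``$\LM(g)$ divides $\LM(f)$''; this is what the definition of reduction demands and is what the proof will yield.)

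For (i) $\Leftrightarrow$ (iii), I would specialize the definition of reducibility to the singleton set $G = \{(g, \mathfrak g)\}$. The index set $J$ is then either empty or $\{1\}$ depending on whether $\LM(g)$ divides $\LM(f)$. Since $f$ is non-zero, the inclusion $\LC(f, \mathfrak f) \subseteq 0$ is impossible, so reducibility forces $J = \{1\}$, i.e. $\LM(g) \mid \LM(f)$. The remaining condition $\LC(f, \mathfrak f) \subseteq \LC(g, \mathfrak g)$ unfolds to $\mathfrak f \LC(f) \subseteq \mathfrak g \LC(g)$, which is exactly (iii).

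For (ii) $\Rightarrow$ (iii), I would pick an arbitrary $a \in \mathfrak f$; by (ii) we have $a \LT(f) = a \LC(f) \LM(f) \in \mathfrak g[x]\LT(g)$. Every element of $\mathfrak g[x] \LT(g)$ is an $R$-linear combination of terms of the form $c x^\beta \LM(g)$ with $c \in \mathfrak g$, so its monomial support consists of multiples of $\LM(g)$. Hence $\LM(g) \mid \LM(f)$. Comparing the coefficient of $\LM(f) = x^\gamma \LM(g)$ on both sides then yields $a \LC(f) \in \mathfrak g \LC(g)$, whence $\mathfrak f \LC(f) \subseteq \mathfrak g \LC(g)$.

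For (iii) $\Rightarrow$ (ii), writing $\LM(f) = x^\gamma \LM(g)$, I would observe that any generator of $\mathfrak f[x]\LT(f)$ has the form $a x^\beta \LT(f) = (a \LC(f)) x^{\beta+\gamma} \LM(g)$ with $a \in \mathfrak f$. Using $\mathfrak f \LC(f) \subseteq \mathfrak g \LC(g)$, we can write $a \LC(f) = c \LC(g)$ for some $c \in \mathfrak g$, so this equals $c x^{\beta+\gamma} \LT(g) \in \mathfrak g[x] \LT(g)$. Taking $R$-linear combinations and invoking Lemma~\ref{lem:rep}(i) to identify $\mathfrak f[x]\LT(f)$ with the $R$-module generated by such elements gives (ii). The only subtlety—really the single bookkeeping point to be careful about—is the coefficient comparison in the $\Rightarrow$ direction, which requires that the ideal $\mathfrak g[x]\LT(g)$ be understood as a set of polynomials whose supports are multiples of $\LM(g)$; this is immediate once written out.
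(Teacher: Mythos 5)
Your proof is correct and follows essentially the same route as the paper's: unfolding the definition of reduction for the singleton $\{(g,\mathfrak g)\}$ relates (i) and (iii), and the support/coefficient comparison inside the term ideal $\mathfrak g[x]\LT(g)$ handles (ii); the paper merely arranges the same computations as a cycle (i)$\Rightarrow$(ii)$\Rightarrow$(iii)$\Rightarrow$(i), with (iii)$\Rightarrow$(i) declared clear. You are also right that condition (iii) as printed contains a typo and should read ``$\LM(g)$ divides $\LM(f)$''.
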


\begin{proof}
  (i) $\Rightarrow$ (ii): By assumption $\LM(g)$ divides $\LM(f)$ and $\LC(f) = \alpha \LC(g)$ for some $\alpha \in \mathfrak g \mathfrak f^{-1}$.
  Hence
  \[ \mathfrak f[x] \LT(f) = \mathfrak f[x] \LC(f) x^{\deg(f)} = \mathfrak f[x] \alpha x^{\deg(f) - \deg(g)} \LT(g) \subseteq \mathfrak f[x] \alpha \LT(g) \subseteq \mathfrak g[x] \LT(g) .\]
  (ii) $\Rightarrow$ (iii): Let $\mu \in \mathfrak \subseteq \mathfrak f[x]$. Since $\mu \LT(f) \in \mathfrak g[x] \LT(g)$ it follows that $\LM(g)$ divides $\LM(f)$ and $\mathfrak f \LC(f) \subseteq \mathfrak g \LC(g)$.
  (iii) $\Rightarrow$ (i): Clear.
\end{proof}

\begin{definition}
  Let $(f, \mathfrak f)$ and $G = \{ (g_i, \mathfrak g_i) \mid 1 \leq i \leq l\}$ be pseudo-polynomials of $R[x]$ and assume that $(f,\mathfrak f)$ can be reduced modulo $G$ and $(a_i)_{i \in J}$ are as in Lemma~\ref{lem:red}. Then we call
  $(f - \sum_{i \in J} a_i g_i, \mathfrak f)$ a \textit{one step reduction} of $(f, \mathfrak f)$ with respect to $G$ and
  we write 
  \[ (f, \mathfrak f) \xrightarrow{G} \Bigl(f - \sum_{i \in J} a_i x^{\deg(f) - \deg(g_i)}  g_i, \mathfrak f\Bigr). \]
\end{definition}

\begin{lemma}\label{lem:propred}
  Let $(h, \mathfrak f)$ be a one step reduction of $(f, \mathfrak f)$ with respect to $G = \{ (g_i,\mathfrak g_i) \mid 1 \leq i \leq l\}$.
  Denote by $I = \langle G \rangle$ the ideal of $R[x]$ generated by $G$. Then the
  following hold:
  \begin{enumerate}
    \item
      The pair $(h, \mathfrak f)$ is a pseudo-polynomial of $R[x]$.
    \item
      We have $\mathfrak f[x](f - h) \subseteq I$.
    \item
      We have $\langle (f, \mathfrak f) \rangle \subseteq I$ if and only if $\langle (h, \mathfrak f) \rangle \subseteq I$.
  \end{enumerate}
\end{lemma}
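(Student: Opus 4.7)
The plan is to unwind the definitions and track the containments carefully. Write $h = f - \sum_{i \in J} a_i x^{\deg(f) - \deg(g_i)} g_i$ with $a_i \in \mathfrak g_i \mathfrak f^{-1}$ as provided by Lemma~\ref{lem:red}. The whole proof then rests on the single observation that $\mathfrak f a_i \subseteq \mathfrak g_i$ for each $i \in J$, which is immediate from $a_i \in \mathfrak g_i \mathfrak f^{-1}$ and Lemma~\ref{lem:lem1}(i).

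For (i), I would check that $\mathfrak f h \subseteq R[x]$. The term $\mathfrak f f$ already lies in $R[x]$ because $(f,\mathfrak f)$ is a pseudo-polynomial, and for each $i \in J$ we have
\[\mathfrak f \, a_i\, x^{\deg(f)-\deg(g_i)} g_i \subseteq \mathfrak g_i g_i \subseteq R[x],\]
again using that $(g_i, \mathfrak g_i)$ is a pseudo-polynomial. Summing these inclusions gives $\mathfrak f h \subseteq R[x]$.

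For (ii), compute
\[\mathfrak f[x](f-h) = \mathfrak f[x] \sum_{i \in J} a_i x^{\deg(f)-\deg(g_i)} g_i \subseteq \sum_{i \in J} (\mathfrak f a_i)[x]\, g_i \subseteq \sum_{i \in J} \mathfrak g_i[x] g_i,\]
where the first inclusion uses Lemma~\ref{lem:rep}(i) and the second uses $\mathfrak f a_i \subseteq \mathfrak g_i$. By Lemma~\ref{lem:rep}(ii) and the definition of $\langle (g_i,\mathfrak g_i)\rangle$, this sum is contained in $I$.

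Part (iii) is then purely formal: $\langle (f,\mathfrak f)\rangle = \mathfrak f[x] f$ and $\langle (h,\mathfrak f)\rangle = \mathfrak f[x] h$ by Lemma~\ref{lem:rep}(ii), and $\mathfrak f[x] f = \mathfrak f[x] h + \mathfrak f[x](f-h)$. Since $\mathfrak f[x](f-h) \subseteq I$ by (ii), the two containments $\mathfrak f[x] f \subseteq I$ and $\mathfrak f[x] h \subseteq I$ are equivalent. There is no real obstacle here; the only thing to be careful about is invoking the right distributivity and scaling identities for fractional ideals (Lemma~\ref{lem:rep}(i)) rather than manipulating elements directly.
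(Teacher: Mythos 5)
Your proof is correct and follows essentially the same route as the paper's: extract the coefficients $a_i \in \mathfrak g_i\mathfrak f^{-1}$ from the definition of one-step reduction, use the single observation $\mathfrak f a_i \subseteq \mathfrak g_i$ for all three parts, and deduce (iii) from (ii) by writing each of $f$ and $h$ as the other plus $\pm(f-h)$. The only nitpick is that $\mathfrak f[x]f = \mathfrak f[x]h + \mathfrak f[x](f-h)$ should be the containment $\mathfrak f[x]f \subseteq \mathfrak f[x]h + \mathfrak f[x](f-h)$ rather than an equality, but since you only use the two symmetric containments, the argument stands.
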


\begin{proof}
  By definition there exists $J \subseteq \{1,\dotsc,r\}$, $a_i \in \mathfrak g_i \mathfrak f^{-1}$, $i \in J$, with $\LC(f) = \sum_{i\in J} a_i \LC(g_i)$.

  (i): We have
  \[ \mathfrak f h = \mathfrak f\Bigl(f - \sum_{i \in J} a_i g_i\Bigr) \subseteq \mathfrak f f + \sum_{i \in J} \mathfrak f a_i g_i \subseteq \mathfrak f f + \sum_{i \in J} \mathfrak g_i g_i \subseteq R[x]. \]

  (ii): Since $f - h = \sum_{i \in I} a_i g_i$ and $a_i \in \mathfrak g_i \mathfrak f^{-1}$ it is clear that $\mathfrak f a_i g_i \subseteq I$.

  (iii): If $\mathfrak f[x]f \subseteq I$, then $\mathfrak f[x](f - \sum_{i \in J} a_i g_i) \subseteq I$, since $\mathfrak f a_i \subseteq \mathfrak g_i$.
  On the other hand, if $\mathfrak f[x] f \subseteq I$, then
  \[ \mathfrak f[x] f = \mathfrak f[x]\Bigl(f - \sum_{i \in J}a_i g_i + \sum_{i \in J} a_i g_i\Bigr) \subseteq \mathfrak f[x]f + \mathfrak f[x]\Bigl(\sum_{i \in J} a_i g_i\Bigr)  \subseteq I.\qedhere \]
\end{proof}

\begin{definition}
  Let $(f, \mathfrak f)$, $(h, \mathfrak f)$ and $G = \{(g_i, \mathfrak g_i) \mid 1 \leq i \leq l\}$ be non-zero pseudo-polynomials of $R[x]$. We say that
  \textit{$(f, \mathfrak f)$ reduces to $(h, \mathfrak f)$ modulo $G$} if there exist pseudo-polynomials $(h_i, \mathfrak f)$, $1 \leq i \leq l$  such that
  \[ (f, \mathfrak f) = (h_1,\mathfrak f) \xrightarrow{G} (h_2,\mathfrak f) \xrightarrow{G} \dotsb \xrightarrow{G} (h_l,\mathfrak f) = (h,\mathfrak f). \]
  In this case we write $(f,\mathfrak f) \xrightarrow{G}_+ (h,\mathfrak f)$. (The relation $\xrightarrow{G}_+$ is thus the reflexive closure of $\xrightarrow{G}$.)
\end{definition}

\begin{lemma}\label{lem:2}
  If $(f, \mathfrak f)$, $(h, \mathfrak f)$ and $G = \{(g_i, \mathfrak g_i) \mid 1 \leq i \leq l\}$ are non-zero pseudo-polynomials with
  $(f, \mathfrak f) \xrightarrow{G} (h, \mathfrak f)$,
  then $\mathfrak f[x](f - h) \subseteq I$. Moreover $\langle (f, \mathfrak f) \rangle \subseteq I$ if and only if $\langle (h, \mathfrak f ) \rangle \subseteq I$.
\end{lemma}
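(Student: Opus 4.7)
The plan is to prove this by induction on the length $l$ of the reduction chain from the definition of $\xrightarrow{G}_+$. (Despite the arrow $\xrightarrow{G}$ written without subscript in the statement, the hypothesis clearly refers to the multi-step relation just defined, since the one-step case is already Lemma~\ref{lem:propred}.) The base case $l = 1$ is vacuous: here $f = h$, so $\mathfrak f[x](f - h) = 0 \subseteq I$ and $\langle(f, \mathfrak f)\rangle = \langle(h, \mathfrak f)\rangle$.

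For the inductive step, I would write the chain as
\[
  (f, \mathfrak f) = (h_1, \mathfrak f) \xrightarrow{G} (h_2, \mathfrak f) \xrightarrow{G} \dotsb \xrightarrow{G} (h_{l-1}, \mathfrak f) \xrightarrow{G} (h_l, \mathfrak f) = (h, \mathfrak f),
\]
apply the induction hypothesis to the prefix ending at $(h_{l-1}, \mathfrak f)$, giving $\mathfrak f[x](f - h_{l-1}) \subseteq I$ together with the biconditional $\langle(f, \mathfrak f)\rangle \subseteq I \Leftrightarrow \langle(h_{l-1}, \mathfrak f)\rangle \subseteq I$, and then apply Lemma~\ref{lem:propred}(ii),(iii) to the final one-step reduction $(h_{l-1}, \mathfrak f) \xrightarrow{G} (h, \mathfrak f)$, yielding $\mathfrak f[x](h_{l-1} - h) \subseteq I$ and $\langle(h_{l-1}, \mathfrak f)\rangle \subseteq I \Leftrightarrow \langle(h, \mathfrak f)\rangle \subseteq I$.

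The first claim then follows by writing $f - h = (f - h_{l-1}) + (h_{l-1} - h)$ and using that $I$ is closed under the $R[x]$-action:
\[
  \mathfrak f[x](f - h) \subseteq \mathfrak f[x](f - h_{l-1}) + \mathfrak f[x](h_{l-1} - h) \subseteq I + I = I.
\]
The biconditional in the conclusion is obtained by chaining the two biconditionals above.

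There is no real obstacle here; the lemma is a formal iteration of Lemma~\ref{lem:propred}. The only thing to be a bit careful about is noting that $\mathfrak f$ is the \emph{same} at every stage of the reduction (this is built into the definition of reduction, where the fractional ideal of the dividend is preserved), so that the induction hypothesis applies with the identical $\mathfrak f[x]$ appearing throughout, and no rescaling of coefficient ideals is needed when summing the two containments.
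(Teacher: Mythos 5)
Your proposal is correct and matches the paper's argument: the paper likewise writes $f - h$ as the telescoping sum of one-step differences and invokes Lemma~\ref{lem:propred}, with your induction being just a formalized version of that telescoping. No substantive difference.
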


\begin{proof}
  Note that $f - h = f - h_1 + h_1 - h_2 + \dotsb - h_l - h$. Hence the claim follows from Lemma~\ref{lem:propred}~(ii).
\end{proof}

\begin{remark}\label{rem:1}
  If $(h, \mathfrak f)$ is a one step reduction of $(f, \mathfrak f)$, then $\deg(h) < \deg(f)$ and there exist terms $h_i \in (\mathfrak g_i \mathfrak f^{-1})[x]$, $i \in I$, such that $f - h = \sum_{1 \leq i \leq l} h_i g_i$.
  Applying this iteratively we see that if $(h, \mathfrak f)$ is a pseudo-polynomial of $R[x]$ with $(f, \mathfrak f) \xrightarrow{G}_+ (h, \mathfrak f)$, then $\deg(h) < \deg(f)$ and there exists $h_i \in (\mathfrak g_i \mathfrak f^{-1})[x]$, $i \in I$, such that $f - h = \sum_{1 \leq i \leq} h_i g_i$.
  Moreover in both cases we have $\deg(f) = \max_{i \in I} (\deg(h_i g_i))$.
\end{remark}

\begin{definition}
  Let $(f, \mathfrak f)$ and $G = \{ (g_i, \mathfrak g_i) \mid 1 \leq i \leq l\}$ be pseudo-polynomials.
  The \textit{leading term} $\LT(f, \mathfrak f)$ is defined to be $\mathfrak f \LT(f)$.  
  Moreover we define the \textit{leading term ideal} of $(f, \mathfrak f)$ and $G$ as $\Lt(f, \mathfrak f) = \mathfrak f[x]\LT(f) = \langle \LT(f, \mathfrak f) \rangle_{R[x]}$ and $\Lt(G) = \sum_{i = 1}^r \Lt(g_i,\mathfrak g_i) \rangle_{R[x]}$ respectively.
  If $F \subseteq R[x]$ is a set of polynomials, then we define $\Lt(F) = \langle \LT(f) \mid f \in F \rangle_{R[x]}$.
\end{definition}

We can now characterize minimality in terms of leading term ideals.

\begin{lemma}\label{lem:3}
  Let $G = \{ (g_i, \mathfrak g_i) \mid 1 \leq i \leq l\}$ be non-zero pseudo-polynomials of $R[x]$.
  A non-zero pseudo-polynomial $(f, \mathfrak f)$ is minimal with respect to $G$, if and only if $\Lt(f,\mathfrak f) \not\subseteq \Lt(G)$.
\end{lemma}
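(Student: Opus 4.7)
The plan is to prove the contrapositive equivalence: $(f, \mathfrak f)$ can be reduced modulo $G$ if and only if $\Lt(f, \mathfrak f) \subseteq \Lt(G)$. Set $J = \{1 \leq i \leq l \mid \LM(g_i) \text{ divides } \LM(f)\}$ throughout.

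For the forward direction, suppose $(f, \mathfrak f)$ can be reduced modulo $G$. By Lemma~\ref{lem:red}, there exist $a_i \in \mathfrak g_i \mathfrak f^{-1}$ for $i \in J$ with $\LC(f) = \sum_{i \in J} a_i \LC(g_i)$. For each $i \in J$, I can write $x^{\deg(f)} = x^{\deg(f) - \deg(g_i)} x^{\deg(g_i)}$, and then using $\mathfrak f a_i \subseteq \mathfrak g_i$ the calculation
\[ \mathfrak f[x] \LT(f) = \mathfrak f[x] \LC(f) x^{\deg(f)} \subseteq \sum_{i \in J} \mathfrak f[x] a_i x^{\deg(f) - \deg(g_i)} \LT(g_i) \subseteq \sum_{i \in J} \mathfrak g_i[x] \LT(g_i) \]
shows that $\Lt(f, \mathfrak f) \subseteq \Lt(G)$. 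This is essentially the same manipulation as in the proof of Lemma~\ref{lem:div}, (i) $\Rightarrow$ (ii).

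For the reverse direction, assume $\mathfrak f[x] \LT(f) \subseteq \sum_{i=1}^l \mathfrak g_i[x] \LT(g_i)$. The key step is a coefficient extraction. Pick any $\mu \in \mathfrak f$, so that $\mu \LC(f) x^{\deg(f)} = \sum_{i=1}^l h_i \LT(g_i)$ for some $h_i \in \mathfrak g_i[x]$. The monomial $x^{\deg(f)}$ on the right-hand side can only be produced by a term $c_i x^{\deg(f) - \deg(g_i)}$ in $h_i$, which forces $\deg(g_i) \leq \deg(f)$ componentwise, i.e.\ $i \in J$. Comparing coefficients of $x^{\deg(f)}$ on both sides yields $\mu \LC(f) = \sum_{i \in J} c_i \LC(g_i)$ with $c_i \in \mathfrak g_i$. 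Letting $\mu$ range over $\mathfrak f$ gives $\mathfrak f \LC(f) \subseteq \sum_{i \in J} \mathfrak g_i \LC(g_i) = \sum_{i \in J} \LC(g_i, \mathfrak g_i)$, which is exactly the reducibility criterion.

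The main obstacle is the monomial bookkeeping in the reverse direction: one must observe that the set $J$ of ``relevant'' indices is forced by the single monomial $x^{\deg(f)}$, so that indices outside $J$ contribute nothing to the equation obtained after extracting that coefficient. Once this is in place the argument reduces to comparing a single coefficient and using Lemma~\ref{lem:red}.
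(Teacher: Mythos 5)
Your proof is correct and follows essentially the same route as the paper's: the forward direction by rewriting $\LT(f)$ as a combination of shifted $\LT(g_i)$'s, and the reverse direction by extracting the coefficient of $x^{\deg(f)}$ from a representation $\mu\LT(f)=\sum_i h_i\LT(g_i)$ and observing that only indices in $J$ can contribute. No gaps.
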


\begin{proof}
  Denote by $J = \{ i \in \{1,\dotsc,r\} \mid \LM(g_i) \text{ divides } \LM(f) \}$.
  Assume first that $(f, \mathfrak f)$ is not minimal, that is, the pseudo-polynomial can be reduced modulo $G$.
  Then there exist $a_i \in \mathfrak g_i \mathfrak f^{-1}$, $i \in J$, such that $\LC(f) = \sum_{i \in J} a_i \LC(g_i)$.
  For every $i \in J$ there exists a monomial $x^{a_i}$ with $\LM(g_i) x^{a_i} = \LM(f)$.
  Hence
  \[ \LT(f) = \LM(f)\LC(f) = \sum_{i \in J} a_i \LM(f) \LC(g_i) = \sum_{ \in J} a_i x^{\alpha_i} \LT(g_i). \]
  Thus it holds that $\mathfrak f \LT(f) = \sum_{i \in J} \mathfrak f a_i x^{\alpha_i} \LT(g_i) \in \sum_{i \in J} \mathfrak g_i[x] \LT(g_i) \subseteq \Lt(G)$.
  This implies $\Lt(f, \mathfrak f) = \mathfrak f[x] f \subseteq \Lt(G)$, as claimed.

  Now assume that $\Lt(f, \mathfrak f) \subseteq \Lt(G)$.
  Let $\alpha \in \mathfrak f$. Since $\alpha \LT(f) \in \Lt(f, \mathfrak f) \subseteq \Lt(G)$, there exist
  $h_i \in \mathfrak g_i[x]$, $1 \leq i \leq l$, with $\alpha \LT(f) = \sum_{i = 1}^r h_i \LT(g_i)$.
  Without loss of generality we may assume that $h_i$ is a term, say, $h_i = a_i x^{\alpha_i}$, where $a_i \in \mathfrak g_i$.
  Denote by $J'$ the set $\{i \in \{1,\dotsc,r\} \mid x^{\alpha_i} \LM(g_i) = \LM(f_i)\}$. Hence we have
  \[ \alpha \LM(f) = \sum_{i \in J'} a_i x^{\alpha_i} \LT(g_i) = \sum_{i \in J'} a_i x^{\alpha_i} \LM(g_i) \LC(g_i). \]
  Comparing coefficients this yields $\alpha \LC(f) = \sum_{i \in J'} a_i \LC(g_i)$.
  Thus $\LC(f, \mathfrak f) = \mathfrak f \LC(f) \subseteq \sum_{i \in J'} \mathfrak g_i \LC(g_i) = \sum_{i \in J'} \LC(g_i, \mathfrak g_i)$.
  As $J' \subseteq J$, it follows that $(f, \mathfrak f)$ can be reduced modulo $G$.
\end{proof}

\begin{theorem}\label{thm:rep}
  Let $(f, \mathfrak f)$ and $G = \{ (g_i,\mathfrak g_i) \mid 1 \leq i \leq l\}$ be pseudo-polynomials.
      There exists a pseudo-polynomial $(h, \mathfrak f)$ which is minimal with respect to $G$ and $h_i \in (\mathfrak g_i \mathfrak f^{-1})[x]$, $1 \leq i \leq l$, such that $(f, \mathfrak f) \xrightarrow{G}_+ (r, \mathfrak f)$,
  \[ f - r = \sum_{i=1}^r h_i g_i, \]
      and $\deg(f) = \max((\max_{1 \leq i \leq l}\deg(h_i g_i), \deg(r))$.
\end{theorem}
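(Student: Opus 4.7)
My plan is to proceed by well-founded induction on the leading monomial $\LM(f)$ with respect to the monomial ordering $<$, which is a well-ordering on $\N^n$. Conceptually, the argument formalizes the iteration sketched in Remark~\ref{rem:1}: one repeatedly applies one-step reductions until arriving at a minimal pseudo-polynomial, accumulating the cofactors $h_i$ along the way.

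For the base case I would handle both $f = 0$ and the situation where $(f, \mathfrak f)$ is already minimal modulo $G$ by setting $(r, \mathfrak f) = (f, \mathfrak f)$ and $h_i = 0$ for all $i$; every required condition then holds trivially (with the convention $\deg(0) = -\infty$).

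For the inductive step, suppose $(f, \mathfrak f)$ can be reduced modulo $G$. By Lemma~\ref{lem:red} I would pick $a_i \in \mathfrak g_i \mathfrak f^{-1}$, $i \in J$, with $\LC(f) = \sum_{i \in J} a_i \LC(g_i)$, and perform one step of reduction
\[ (f, \mathfrak f) \xrightarrow{G} (f', \mathfrak f), \quad f' = f - \sum_{i \in J} a_i x^{\deg(f) - \deg(g_i)} g_i, \]
so that $\deg(f') < \deg(f)$ by Remark~\ref{rem:1}. Applying the induction hypothesis to $(f', \mathfrak f)$ yields a minimal $(r, \mathfrak f)$ and polynomials $h_i' \in (\mathfrak g_i \mathfrak f^{-1})[x]$ with $(f', \mathfrak f) \xrightarrow{G}_+ (r, \mathfrak f)$, $f' - r = \sum_i h_i' g_i$, and the analogous degree identity. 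Setting $h_i := h_i' + a_i x^{\deg(f) - \deg(g_i)}$ for $i \in J$ and $h_i := h_i'$ otherwise---using that $(\mathfrak g_i \mathfrak f^{-1})[x]$ is closed under addition---gives the desired global representation, while transitivity of $\xrightarrow{G}$ assembles the reduction chain $(f, \mathfrak f) \xrightarrow{G}_+ (r, \mathfrak f)$.

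The step that needs a moment's care will be the sharp degree equality. Since $\LC(f) \neq 0$, at least one $a_i$ with $i \in J$ is nonzero; as $R$ is a domain, the term $a_i x^{\deg(f) - \deg(g_i)} g_i$ then has degree exactly $\deg(f)$, while $h_i' g_i$ has degree at most $\deg(f') < \deg(f)$, so cancellation cannot lower the top degree of $h_i g_i$. Hence $\deg(h_i g_i) = \deg(f)$ for that index, and combined with the obvious upper bound this forces $\deg(f) = \max(\max_{i} \deg(h_i g_i), \deg(r))$. Termination of the recursion is immediate from the well-ordering property of the monomial order.
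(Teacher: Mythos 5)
Your proof is correct and is essentially the paper's argument: the paper disposes of the theorem by citing Remark~\ref{rem:1} (iterated one-step reduction with the degree bookkeeping) together with the well-ordering of the monomial order, and your well-founded induction on $\LM(f)$ is exactly the formalization of that iteration, including the observation that the freshly added term $a_i x^{\deg(f)-\deg(g_i)}g_i$ of degree $\deg(f)$ cannot be cancelled by the lower-degree contributions $h_i'g_i$, which yields the sharp degree equality.
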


\begin{proof}
  Follows immediately from Lemma~\ref{lem:3} and Remark~\ref{rem:1}.
\end{proof}

We can now generalize the characterization of Gröbner bases to pseudo-Gröbner bases.

\begin{theorem}\label{thm:defgroeb}
  Let $I$ be an ideal of $R[x]$ and $G = \{ (g_i, \mathfrak g_i) \mid 1 \leq i \leq l\}$ non-zero pseudo-polynomials of $I$. Then the following are equivalent:
  \begin{enumerate}
    \item
      $\Lt(I) = \Lt(G)$;
    \item
      For a pseudo-polynomial $(f, \mathfrak f)$ of $R[x]$ we have
      $\langle (f, \mathfrak f) \rangle \subseteq I$ if and only if $(f, \mathfrak f)$ reduces to $0$ modulo $G$.
    \item
      For every pseudo-polynomial $(f, \mathfrak f)$ of $R[x]$ with $\langle (f, \mathfrak f) \rangle \subseteq I$ there exist $h_i \in (\mathfrak g_i \mathfrak f^{-1})[x]$, $1 \leq i \leq l$, such that $f = \sum_{i=1}^r h_i g_i$ and $\LM(f) = \max_{1 \leq i \leq l}(\LM(h_ig_i))$.
    \item
      If $(a_{ij})_{1 \leq j \leq n_i}$ are ideal generators of $\mathfrak g_i$ for $1 \leq i \leq l$, 
      then the set 
      \[ \{ a_{ij}g_i \mid 1 \leq i \leq l, 1 \leq j \leq n_i\} \]
      is a Gröbner basis of $I$.
  \end{enumerate}
\end{theorem}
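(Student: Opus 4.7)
The plan is to prove the cyclic chain $(i) \Rightarrow (ii) \Rightarrow (iii) \Rightarrow (i)$, and then dispatch $(i) \Leftrightarrow (iv)$ separately from the definition of $\Lt(G)$. For $(i) \Rightarrow (ii)$, the ``reduces to $0$ implies $\langle (f, \mathfrak f) \rangle \subseteq I$'' direction follows by iterating Lemma~\ref{lem:2}. The substantive direction is the converse: assume $\langle (f, \mathfrak f) \rangle \subseteq I$, apply Theorem~\ref{thm:rep} to obtain a minimal pseudo-polynomial $(r, \mathfrak f)$ with $(f, \mathfrak f) \xrightarrow{G}_+ (r, \mathfrak f)$, and iterate Lemma~\ref{lem:2} to transfer the containment to $(r, \mathfrak f)$. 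If $r \neq 0$, then $\Lt(r, \mathfrak f) \subseteq \Lt(I) = \Lt(G)$ by $(i)$, which via Lemma~\ref{lem:3} contradicts minimality; hence $r = 0$.

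For $(ii) \Rightarrow (iii)$, I would combine $(ii)$ with Remark~\ref{rem:1} applied to the reduction chain $(f, \mathfrak f) \xrightarrow{G}_+ (0, \mathfrak f)$; this directly yields the representation $f = \sum_i h_i g_i$ with $h_i \in (\mathfrak g_i \mathfrak f^{-1})[x]$ and $\deg(f) = \max_i \deg(h_i g_i)$, which is exactly the $\LM$ condition in $(iii)$. For $(iii) \Rightarrow (i)$, the inclusion $\Lt(G) \subseteq \Lt(I)$ is immediate, since each generator $\alpha \LT(g_i)$ of $\mathfrak g_i[x] \LT(g_i)$ with $\alpha \in \mathfrak g_i$ appears as the leading term of $\alpha g_i \in I$. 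For the reverse inclusion, take an arbitrary nonzero $f \in I$ and apply $(iii)$ to the pseudo-polynomial $(f, R)$: this produces $h_i \in \mathfrak g_i[x]$ with $f = \sum h_i g_i$ and $\LM(f) = \max_i \LM(h_i g_i)$. Extracting the leading coefficient along the index set $J = \{ i : \LM(h_i g_i) = \LM(f)\}$ gives
\[ \LT(f) = \sum_{i \in J} \LT(h_i) \LT(g_i), \]
and each summand lies in $\mathfrak g_i[x] \LT(g_i) \subseteq \Lt(G)$ since $\LC(h_i) \in \mathfrak g_i$.

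For $(i) \Leftrightarrow (iv)$, writing $\mathfrak g_i = \sum_j a_{ij} R$ yields $\mathfrak g_i[x] \LT(g_i) = \langle a_{ij} \LT(g_i) \mid 1 \leq j \leq n_i \rangle_{R[x]}$, hence $\Lt(G) = \Lt(\{a_{ij} g_i \mid i, j\})$; together with $a_{ij} g_i \in I$, the equivalence with the standard definition of a Gröbner basis is immediate.

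I expect no real obstacle: the preparatory results (Theorem~\ref{thm:rep}, Lemma~\ref{lem:3}, Remark~\ref{rem:1}, and Lemma~\ref{lem:2}) have already absorbed the technical content of reduction, minimality, and leading-term book-keeping. The only subtlety is tracking the fractional ideal $\mathfrak g_i \mathfrak f^{-1}$ correctly in $(iii)$; this simplifies to $\mathfrak g_i$ in the $(iii) \Rightarrow (i)$ step because one tests $(iii)$ against the pseudo-polynomial $(f, R)$.
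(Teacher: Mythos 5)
Your proposal is correct and follows essentially the same route as the paper: the cycle $(i)\Rightarrow(ii)\Rightarrow(iii)\Rightarrow(i)$ via Theorem~\ref{thm:rep}, Lemma~\ref{lem:3}, Lemma~\ref{lem:2} and Remark~\ref{rem:1}, with $(i)\Leftrightarrow(iv)$ handled by the identity $\Lt(G)=\Lt(\{a_{ij}g_i\})$. The only (harmless) variation is that in $(iii)\Rightarrow(i)$ you specialize to the pseudo-polynomial $(f,R)$ for $f\in I$ rather than multiplying a general coefficient ideal $\mathfrak f$ through by an element $\alpha\in\mathfrak f$ as the paper does; both yield $\Lt(I)\subseteq\Lt(G)$.
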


\begin{proof}
  (i) $\Rightarrow$ (ii): If $(f, \mathfrak f) \xrightarrow{G}_+ 0$, then Lemma~\ref{lem:2} implies that $\langle( f, \mathfrak f)\rangle \subseteq I$.
  Now assume $\langle( f, \mathfrak f)\rangle \subseteq I$.
  Theorem~\ref{thm:rep} there exists a non-zero pseudo-polynomial $(r, \mathfrak f)$, which is minimal with respect to $G$ such that 
  $(f, \mathfrak f) \xrightarrow{G}_+ (r, \mathfrak f)$.
  If $r \neq 0$, then Lemma~\ref{lem:3} shows that
  $\Lt(r, \mathfrak f) \subsetneq \Lt(G) = \Lt(I)$.
  As $\langle (f, \mathfrak f)\rangle \subseteq I$ we also have
  $\langle (r, \mathfrak f)\rangle \subseteq I$ by Lemma~\ref{lem:2} and hence
  $\Lt(r, \mathfrak f) \subseteq \Lt(I)$, a contradiction.
  Thus $(f, \mathfrak f) \xrightarrow{G}_+ 0$.

  (ii) $\Rightarrow$ (iii): Clear from Remark~\ref{rem:1}.

  (iii) $\Rightarrow$ (i): We just have to show that $\Lt(I) \subseteq \Lt(G)$. Let $\langle(f, \mathfrak f)\rangle \subseteq I$ and write $f = \sum_{i \in J} h_i g_i$ with $h_i \in (\mathfrak g_i \mathfrak f^{-1})[x]$ and $\LM(f) = \max_{1 \leq i \leq l}(\LM(h_i g_i))$.
  Thus $\LT(f) = \sum_{i \in J} \LT(h_i)\LT(g_i)$, where $J = \{ i \in J \mid \LM(g_ih_i) = \LM(f_i)\}$.
  Since $\LT(h_i) \in \mathfrak g_i \mathfrak f^{-1}[x]$, for every $\alpha \in \mathfrak f$ we therefore have
  \[ \alpha \LT(f) \subseteq \sum_{i \in J} \mathfrak g_i[x] \LT(g_i), \text{ that is, } \mathfrak \Lt(f, \mathfrak f) = \mathfrak f[x] \LT(f) \subseteq \sum_{i \in I} \Lt(g_i, \mathfrak g_i) = \Lt(G). \qedhere \]
  (iv) $\Leftrightarrow$ (i): This follows from the fact that
  \[ \Lt(G) = \Lt(\{ a_{ij}g_i \mid 1 \leq i \leq l, 1 \leq j \leq n_i\}). \]
\end{proof}

\begin{definition}\label{def:grob}
  Let $I$ be an ideal of $R[x]$. A family $G$ of pseudo-polynomials of $R[x]$ is called a \textit{pseudo-Gröbner basis} of $I$ (with respect to $<$), if $G$ satisfies any of the equivalent conditions of Theorem~\ref{thm:defgroeb}.
\end{definition}

\begin{remark}\label{rem:rem2}
  \hfill
  \begin{enumerate}
    \item
      If one replaces pseudo-polynomials by ordinary polynomials in Theorem~\ref{thm:defgroeb}, one recovers the notion of Gröbner basis of an ideal $I \subseteq G$.
    \item
      Since $R$ is Noetherian, an ideal $I$ of $R[x]$ has a Gröbner basis $\{g_1,\dotsc,g_l\}$ in the ordinary sense~\cite[Corollary 4.1.17]{Adams1994}.
      Recall that his means that $\Lt(g_1,\dotsc,g_l) = \langle \LT(g_1),\dotsc,\LT(g_n) \rangle = \Lt(I)$.
      As $\Lt(g_1,\dotsc,g_l)$ is equal to the leading term ideal of $G = \{ (g_i, R) \mid 1 \leq i \leq l\}$, we see at once that $I$ also has a pseudo-Gröbner basis.
    \item
      In view of Theorem~\ref{thm:defgroeb}~(iv), the notion of pseudo-Gröbner basis is a generalization of~\cite{Adams1997} from the univariate to the multivariate case.
  \end{enumerate}
\end{remark}

Recall that a generating set $G$ of an ideal $I$ in $R[x]$ is called a strong Gröbner basis,
if for every $f \in I$ there exists $g \in G$ such that $\LT(g)$ divides $\LT(f)$.
It is well known, that in case of principal ideal rings, a strong Gröbner basis always exists.
We show that when passing to pseudo-Gröbner bases, we can recover this property for Dedekind domains.

\begin{definition}
  Let $(f, \mathfrak f)$ and $(g, \mathfrak g)$ be two non-zero pseudo-polynomials in $R[x]$. We say that
  $(f,\mathfrak f)$ \textit{divides} $(g, \mathfrak g)$ if $g \mathfrak g[x] \subseteq f \mathfrak f[x]$.
  Let $I \subseteq R[x]$ be an ideal. A set $G = \{ (g_i,\mathfrak g_i) \mid 1 \leq i \leq l\}$ of pseudo-polynomials in $I$ 
  is a \textit{strong pseudo-Gröbner basis}, if for every pseudo-polynomial $(f, \mathfrak f)$ in $I$ there
  exists $i \in \{1,\dotsc,r\}$ such that $\Lt(g_i, \mathfrak g_i)$ divides $\Lt(f, \mathfrak f)$.
\end{definition}

We now fix non-zero pseudo-polynomials $G = \{(g_i,\mathfrak g_i) \mid 1 \leq i \leq l\}$.
For a subset $J \subseteq \{1,\dotsc,r\}$
we define $x_J = \lcm(\LM(g_i) \mid i \in J)$ and $\mathfrak c_J = \sum_{i \in J}\mathfrak g_i \LC(g_i)$.
Let $1 = \sum_{i \in J} a_i \LC(g_i)$ with $a_i \in \mathfrak c_J^{-1} \mathfrak g_i$ for $i \in J$
and define $f_J = \sum_{i \in J} a_i \frac{x_J}{\LM(g_i)} g_i$.
Note that by construction $\LT(f_J) = x_J$.
Finally recall that $J \subseteq \{1,\dotsc,r\}$ is saturated, if for $i \in \{1,\dotsc,r\}$ with $\LM(g_i) \mid x_J$ we have $i \in J$.

\begin{theorem}
  Assume that $G = \{(g_i,\mathfrak g_i) \mid 1 \leq i \leq l\}$ is a pseudo-Gröbner basis of the ideal $I \subseteq R[x]$.
  Then 
  \[ \{(f_J, \mathfrak c_J) \mid J \subseteq \{1,\dotsc,r\} \text{ saturated} \} \]
  is a strong pseudo-Gröbner basis of $I$.
\end{theorem}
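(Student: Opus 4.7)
The plan is to show that for every non-zero pseudo-polynomial $(f, \mathfrak f)$ in $I$ one can exhibit a saturated index set $J$ whose associated $(f_J, \mathfrak c_J)$ satisfies $\Lt(f, \mathfrak f) \subseteq \Lt(f_J, \mathfrak c_J)$, that is, $(f_J, \mathfrak c_J)$ divides $(f, \mathfrak f)$ in the leading-term sense. The natural choice is
\[ J = \{ i \in \{1,\dotsc,r\} \mid \LM(g_i) \text{ divides } \LM(f)\}. \]

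First I would check that this $J$ is saturated and that $\LM(f_J) = x_J$ divides $\LM(f)$. Both are immediate: every $\LM(g_i)$ with $i \in J$ divides $\LM(f)$, so $x_J = \lcm(\LM(g_i) \mid i \in J)$ divides $\LM(f)$, and any $k$ with $\LM(g_k) \mid x_J$ therefore satisfies $\LM(g_k) \mid \LM(f)$, placing $k$ in $J$. By Lemma~\ref{lem:div}, it then suffices to verify the leading-coefficient condition $\mathfrak f \LC(f) \subseteq \mathfrak c_J \LC(f_J) = \mathfrak c_J$, since $\LC(f_J) = 1$ by construction.

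This is where the pseudo-Gröbner basis hypothesis enters, and it is the main obstacle. Since $\Lt(I) = \Lt(G) = \sum_i \mathfrak g_i[x] \LT(g_i)$, for any $\alpha \in \mathfrak f$ we have $\alpha \LT(f) \in \Lt(I)$, so we may write $\alpha \LT(f) = \sum_i h_i \LT(g_i)$ with $h_i \in \mathfrak g_i[x]$. The left-hand side is a single term in $\LM(f)$; expanding each $h_i \LT(g_i)$ term by term, only those indices $i$ with $\LM(g_i) \mid \LM(f)$, that is $i \in J$, can contribute to the coefficient of $\LM(f)$, and each such contribution is the product of the coefficient of $\LM(f)/\LM(g_i)$ in $h_i$ (which lies in $\mathfrak g_i$) with $\LC(g_i)$. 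Hence $\alpha \LC(f) \in \sum_{i \in J} \mathfrak g_i \LC(g_i) = \mathfrak c_J$, and letting $\alpha$ range over $\mathfrak f$ yields $\mathfrak f \LC(f) \subseteq \mathfrak c_J$. The remaining verifications, namely that each $(f_J, \mathfrak c_J)$ is a pseudo-polynomial lying in $I$ and the final appeal to Lemma~\ref{lem:div}, are routine from the construction of $f_J$ and the fact that $a_i \in \mathfrak c_J^{-1} \mathfrak g_i$.
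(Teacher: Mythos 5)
Your proof is correct and follows essentially the same route as the paper: the same choice of saturated set $J = \{ i \mid \LM(g_i) \text{ divides } \LM(f)\}$, the same verification that $x_J \mid \LM(f)$ and $\mathfrak f\LC(f) \subseteq \mathfrak c_J = \mathfrak c_J \LC(f_J)$, and the same final appeal to Lemma~\ref{lem:div}. The only difference is that you re-derive the leading-coefficient containment by the coefficient-comparison argument, which is exactly the content of Lemma~\ref{lem:3} combined with the definition of reduction; the paper simply invokes the pseudo-Gröbner basis property at that point.
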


\begin{proof}
  Let $(f, \mathfrak f)$ be a non-zero pseudo-polynomial in $I$ and let $J = \{ i \in \{1,\dotsc,r\} \mid \LM(g_i) \text{ divides } \LM(f) \}$.
  Then $J$ is saturated and since $G$ is a pseudo-Gröbner basis of $I$ we have
  \[ \LC(f, \mathfrak f) = \LC(f)\mathfrak f \subseteq \sum_{i \in J} \LC(g_i)\mathfrak g_i = \mathfrak c_J = \LC(f_J\mathfrak c_J). \]
  Furthermore $\LM(f_J) = x_J \mid \LM(f)$ and thus $(f_J,\mathfrak c_J)$ divides $(f, \mathfrak f)$ by Lemma~\ref{lem:div}.
\end{proof}

\begin{corollary}
  Every ideal $I$ of $R[x]$ has a strong pseudo-Gröbner basis.
\end{corollary}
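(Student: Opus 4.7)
The plan is to obtain the strong pseudo-Gröbner basis by a two-step reduction: first produce any pseudo-Gröbner basis of $I$, then feed it into the construction of the preceding theorem.

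First I would invoke Remark~\ref{rem:rem2}~(ii), which says that every ideal $I \subseteq R[x]$ admits a pseudo-Gröbner basis. Concretely, since $R$ is Noetherian, the ring $R[x]$ is Noetherian as well, so by the classical existence result for Gröbner bases over Noetherian rings there is an ordinary Gröbner basis $\{g_1,\dotsc,g_l\} \subseteq I$. Regarding each $g_i$ as the pseudo-polynomial $(g_i, R)$, the observation already recorded in Remark~\ref{rem:rem2}~(ii) tells us that $G = \{(g_i, R) \mid 1 \leq i \leq l\}$ is a pseudo-Gröbner basis of $I$ in the sense of Definition~\ref{def:grob}.

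Second, I would apply the preceding theorem to this $G$. That theorem asserts that whenever $G$ is a pseudo-Gröbner basis of $I$, the family
\[ \{(f_J, \mathfrak c_J) \mid J \subseteq \{1,\dotsc,l\} \text{ saturated}\} \]
is a strong pseudo-Gröbner basis of $I$. Since the set of saturated subsets of $\{1,\dotsc,l\}$ is finite, this yields a (finite) strong pseudo-Gröbner basis, completing the proof.

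There is essentially no obstacle: the corollary is a direct consequence of the two results it follows. The only small subtlety is to make sure the hypotheses of the preceding theorem are met, which is why the first step explicitly produces a pseudo-Gröbner basis rather than just noting the existence of an ordinary one; but this is immediate from the remark. Hence the proof reduces to a one-line citation of Remark~\ref{rem:rem2}~(ii) together with the previous theorem.
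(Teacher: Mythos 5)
Your proof is correct and is exactly the argument the paper intends (the corollary is stated without proof precisely because it follows by combining Remark~\ref{rem:rem2}~(ii) with the preceding theorem, as you do).
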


\section{Syzygies}

We already saw in Remark~\ref{rem:rem2}~(ii), that the existence of pseudo-Gröbner basis is a trivial consequence of the fact the Gröbner bases exists whenever the coefficient ring is Noetherian.
The actual usefulness of pseudo-polynomials come from the richer structure of their syzygies, which can be used to characterize and compute Gröbner bases (see~\cite{Moller1988}).
In this section we will show that, similar to the case of principal ideal rings, the syzygy modules of pseudo-polynomials have a basis corresponding to generalized $S$-polynomials.

\subsection{Generating sets}

Consider a family $G = \{(g_i, \mathfrak g_i) \mid 1 \leq i \leq l\}$ of non-zero pseudo-polynomials. As $G = \sum_{1 \leq i \leq l} \mathfrak g_i[x] g_i$, the map
\begin{align*} \varphi \colon \mathfrak g_1[x] \times \dots \times \mathfrak g_l[x] \longrightarrow I, \quad  (h_1,\dots,h_n) \longmapsto \sum_{i=1}^l h_i g_i
\end{align*}
is a well-defined surjective morphism of $R[x]$-modules.

\begin{definition}
  With the notation of the preceding paragraph we call $\ker(\varphi)$ the \textit{syzygies} of $G$ and denote it by $\Syz(G)$.
  A \textit{pseudo-syzygy} of $G$ is a pseudo-element of $\Syz(G)$, that is, a pair $((h_1,\dotsc,h_l), \mathfrak h)$ consisting of
  polynomials $(h_1,\dotsc,h_n) \in K[x]^l$ such that $\mathfrak h \cdot (h_1,\dotsc,h_l) \subseteq \Syz(G)$.
  Equivalently, $\sum_{1 \leq i \leq l} h_i g_i = 0$ and $\mathfrak h h_i \subseteq \mathfrak g_i[x]$ for all $1 \leq i \leq l$.
  
  Assume that the polynomials $g_1, \dotsc,g_l$ are terms. Then we call the pseudo-syzygy $((h_1,\dotsc,h_l), \mathfrak h)$ \textit{homogeneous} if $h_i$ is a term for $1 \leq i \leq l$ and there exists $\alpha \in \N^n$ with $\LM(h_i g_i) = x^\alpha$ for all $1 \leq i \leq l$.
\end{definition}

In the following we will denote by $e_i \in K[x]^l$ the element with components $(\delta_{ij})_{1 \leq j \leq l}$, where $\delta_{ii} = 1$ and $\delta_{ij} = 0$ if $j \neq i$.

\begin{lemma}\label{lem:syzgenset}
  Let $G = \{(g_i,\mathfrak g_i) \mid 1 \leq i \leq l\}$ be non-zero pseudo-polynomials. Then $\Syz(G)$ has a finite generating set of homogeneous pseudo-syzygies.
\end{lemma}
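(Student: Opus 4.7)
The plan is to equip the ambient module $M := \mathfrak g_1[x] \times \dots \times \mathfrak g_l[x]$ with a shifted $\N^n$-grading that makes $\varphi$ homogeneous of degree zero, and then exploit standard results on Noetherian graded modules. Since the notion of homogeneous pseudo-syzygy is only defined when each $g_i$ is a term, I work under that assumption and write $g_i = c_i x^{\alpha_i}$. I would declare an element of the form $a x^\beta e_i$ with $a \in \mathfrak g_i$ to be homogeneous of degree $\alpha_i + \beta$, where $e_i$ is the $i$-th standard basis vector of $K[x]^l$. Since $\varphi(a x^\beta e_i) = a c_i x^{\alpha_i + \beta}$ lies in degree $\alpha_i + \beta$ for the standard $\N^n$-grading on $R[x]$, the map $\varphi$ becomes a degree-zero morphism of graded $R[x]$-modules.

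Consequently $\Syz(G) = \ker(\varphi)$ is a graded $R[x]$-submodule of $M$. Each fractional ideal $\mathfrak g_i$ is finitely generated as an $R$-module, so $\mathfrak g_i[x]$ is finitely generated over $R[x]$, and hence so is $M$. As $R$ is Noetherian so is $R[x]$, whence $\Syz(G)$ is finitely generated. The standard fact that a graded submodule of a Noetherian graded module admits a finite generating set consisting of homogeneous elements then yields homogeneous generators $\sigma^{(1)}, \dots, \sigma^{(m)}$ of $\Syz(G)$.

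Unpacking the grading, a homogeneous element of $\Syz(G)$ of degree $\gamma$ is a tuple $(h_1,\dots,h_l)$ in which each nonzero $h_i$ is a term $a_i x^{\gamma - \alpha_i}$ with $a_i \in \mathfrak g_i$; in particular $\LM(h_i g_i) = x^\gamma$ whenever $h_i \neq 0$. This matches the definition of a homogeneous pseudo-syzygy with attached fractional ideal $\mathfrak h = R$, so the pairs $(\sigma^{(j)}, R)$ provide the required finite generating set. The only mildly subtle point is verifying that $\mathfrak g_i[x]$ carries a well-defined $\N^n$-grading despite $\mathfrak g_i$ not being a free $R$-module, but this is immediate since the grading only depends on exponents of $x$ and is compatible with any $R$-submodule structure on the coefficients.
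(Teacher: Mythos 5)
Your proof is correct and follows essentially the same route as the paper: reduce to finite generation of $\Syz(G)$ via Noetherianity of $R[x]$ and of the ambient module $\mathfrak g_1[x]\times\dots\times\mathfrak g_l[x]$, then extract homogeneous generators and attach the coefficient ideal $R$. The only difference is that you spell out the shifted $\N^n$-grading behind what the paper calls ``a standard argument,'' which is a welcome clarification rather than a new approach.
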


\begin{proof}
  Since $R$ is Noetherian, so is $R[x]$ by Hilbert's basis theorem. In particular $R[x]^l$ is a Noetherian $R[x]$-module. Since the $\mathfrak g_i$ are fractional $R$-ideals, there exists $\alpha \in R$ such that $\alpha \mathfrak g_i \subseteq R$ for all $1 \leq i \leq l$. In particular $\mathfrak g_i[x] \subseteq (\frac 1 \alpha R)[x] = (\frac 1 \alpha) R[x]$.
  Thus
  \[ \mathfrak g_1[x] \times \dots \times \mathfrak g_l[x] \subseteq (1/\alpha) (R[x])^l \cong R[x]^l \]
  is a Noetherian $R[x]$-module as well. Thus the $R[x]$-submodule $\Syz(G)$ is finitely generated.
  A standard argument shows that $\Syz(G)$ is generated by finitely many homogeneous syzygies $v_1,\dotsc,v_m \in \Syz(G)$.
  Hence $\Syz(G) = \langle (v_1,R), \dotsc, (v_m, R)\rangle$ is generated by finitely many homogeneous pseudo-syzygies.
\end{proof}

We can now characterize pseudo-Gröbner bases in terms of syzygies.

\begin{theorem}\label{thm:groebcond2}
  Let $G = \{(g_i,\mathfrak g_i) \mid 1 \leq i \leq l\}$ be non-zero pseudo-polynomials of $R[x]$ and $B$ a finite generating set of homogeneous syzygies of $\Syz(\LT(g_1,\mathfrak g_1),\dots,\LT(g_l,\mathfrak g_l))$.
  Then the following are equivalent:
  \begin{enumerate}
    \item
      $G$ is a Gröbner basis of $\langle G \rangle$.
    \item
      For all $((h_1,\dotsc,h_l),\mathfrak h) \in B$ we have $(\sum_{1 \leq i \leq l} h_i g_i, \mathfrak h) \xrightarrow{G}_+ (0, \mathfrak h)$.
  \end{enumerate}
\end{theorem}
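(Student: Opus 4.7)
The plan is to prove (i) $\Rightarrow$ (ii) by direct appeal to Theorem~\ref{thm:defgroeb}~(ii), and (ii) $\Rightarrow$ (i) by verifying Theorem~\ref{thm:defgroeb}~(iii) through a minimality argument on representations of elements in $\langle G\rangle$.

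For (i) $\Rightarrow$ (ii), take any $((h_1,\dots,h_l),\mathfrak h) \in B$ and set $f = \sum_i h_i g_i$. Since $\mathfrak h h_i \subseteq \mathfrak g_i[x]$, the pair $(f,\mathfrak h)$ is a pseudo-polynomial with $\mathfrak h[x] f \subseteq \sum_i \mathfrak g_i[x] g_i \subseteq \langle G\rangle$, so $\langle (f,\mathfrak h)\rangle \subseteq \langle G\rangle$ and Theorem~\ref{thm:defgroeb}~(ii) yields $(f,\mathfrak h) \xrightarrow{G}_+ (0,\mathfrak h)$.

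For (ii) $\Rightarrow$ (i) I verify condition (iii) of Theorem~\ref{thm:defgroeb}. Let $(f,\mathfrak f)$ be a pseudo-polynomial with $\langle (f,\mathfrak f)\rangle \subseteq \langle G\rangle$. By Lemma~\ref{lem:rep}~(iii) pick $h_i \in \mathfrak g_i \mathfrak f^{-1}[x]$ with $f = \sum_i h_i g_i$ minimizing $\alpha := \max_i \deg(h_i g_i)$ with respect to $<$, and assume for contradiction $\alpha > \deg(f)$. Let $J = \{i : \deg(h_i g_i) = \alpha\}$, $s_i = \LT(h_i)$ for $i \in J$, and $s_i = 0$ otherwise. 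Since the degree-$\alpha$ part of $f$ vanishes, $\sum_i s_i \LT(g_i) = 0$; together with $\mathfrak f s_i \subseteq \mathfrak g_i[x]$ this makes $((s_1,\dots,s_l),\mathfrak f)$ a homogeneous pseudo-syzygy of $(\LT(g_1,\mathfrak g_1),\dots,\LT(g_l,\mathfrak g_l))$ of degree $\alpha$, with respect to the grading $\deg(e_i) := \deg(g_i)$ on $K[x]^l$. Write $B = \{((b_1^{(k)},\dots,b_l^{(k)}),\mathfrak b^{(k)})\}_{k}$ with homogeneous degrees $\alpha_k$. Applying Lemma~\ref{lem:rep}~(iii) to the syzygy module, and extracting the degree-$(\alpha-\alpha_k)$ component, yields term coefficients $c_k \in \mathfrak b^{(k)} \mathfrak f^{-1}[x]$ of degree $\alpha - \alpha_k$ with $s_i = \sum_k c_k b_i^{(k)}$. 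Setting $F_k = \sum_i b_i^{(k)} g_i$, the leading terms cancel and $\deg(F_k) < \alpha_k$; by hypothesis (ii) together with Theorem~\ref{thm:rep} and Remark~\ref{rem:1} there exist $f_{k,i} \in \mathfrak g_i (\mathfrak b^{(k)})^{-1}[x]$ with $F_k = \sum_i f_{k,i} g_i$ and $\deg(f_{k,i} g_i) \leq \deg(F_k) < \alpha_k$. Substituting,
\[ \sum_i s_i g_i \;=\; \sum_k c_k F_k \;=\; \sum_i \Bigl(\sum_k c_k f_{k,i}\Bigr) g_i, \]
with $c_k f_{k,i} \in \mathfrak g_i \mathfrak f^{-1}[x]$ and $\deg(c_k f_{k,i} g_i) < \alpha$. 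Replacing the leading parts of $\sum_{i \in J} h_i g_i$ in the original representation by this rewriting produces new $h_i' \in \mathfrak g_i \mathfrak f^{-1}[x]$ with $f = \sum_i h_i' g_i$ and $\max_i \deg(h_i' g_i) < \alpha$, contradicting minimality of $\alpha$.

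The main obstacle is the homogeneous-decomposition step: one must justify that the homogeneous pseudo-syzygy $(s_1,\dots,s_l)$ admits a decomposition over $B$ with \emph{homogeneous} (equivalently, term-valued) coefficients so that the ensuing degree bookkeeping actually forces $\alpha$ to drop. This is a standard graded-module argument, since each pseudo-ideal $\mathfrak b^{(k)} \mathfrak f^{-1}[x] \subseteq K[x]$ is a homogeneous sub-$R[x]$-module; equating the degree-$(\alpha - \deg(g_i))$ components in $s_i = \sum_k c_k b_i^{(k)}$ replaces each $c_k$ by its degree-$(\alpha-\alpha_k)$ part, which remains in $\mathfrak b^{(k)} \mathfrak f^{-1}[x]$ and is a term.
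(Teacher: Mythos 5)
Your proof is correct and follows essentially the same route as the paper's: (i)$\Rightarrow$(ii) by feeding $\sum_i h_ig_i$ into Theorem~\ref{thm:defgroeb}~(ii), and (ii)$\Rightarrow$(i) by verifying Theorem~\ref{thm:defgroeb}~(iii) through descent on $\max_i\deg(h_ig_i)$, extracting the homogeneous pseudo-syzygy of leading terms, rewriting it over $B$ via Lemma~\ref{lem:rep}, and using the reduction hypothesis to lower the degree. Your explicit graded-component argument for why the coefficients $c_k$ may be taken to be terms fills in a step the paper dispatches with ``we may assume that each $f_j$ is also a term.''
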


\begin{proof}
  (i) $\Rightarrow$ (ii): Since $\mathfrak h h_i \subseteq \mathfrak g_i[x]$ by definition, we know that
  $\mathfrak h (\sum_{1 \leq i \leq l} h_i g_i) \subseteq \sum_{1 \leq i \leq} \mathfrak g_i[x] \cdot g_i = \langle G \rangle$.
  Hence the element reduces to zero by Theorem~\ref{thm:defgroeb}~(ii).

  (ii) $\Rightarrow$ (i):
  We show that $G$ is a Gröbner basis by verifying Theorem~\ref{thm:defgroeb}~(iii).
  To this end, let $(f, \mathfrak f)$ be a pseudo-polynomial contained in $\langle G \rangle$.
  By Lemma~\ref{lem:rep} there exist elements $u_i \in (\mathfrak g_i \mathfrak f^{-1})[x]$, $1 \leq i \leq l$, such that
  $f = \sum_{1 \leq i \leq l} u_i g_i$.
  We need to show that there exists such a linear combination with $\LM(f) = \max_{1 \leq i \leq l} \LM(u_i g_i)$.
  Let $x^\alpha = \max_{1 \leq i \leq l} (\LM(u_i g_i))$ with $\alpha \in \N^n$,
  and assume that $x^\alpha > \LM(f)$. We will show that $f$ has a representation with strictly smaller degree.
  Denote by $S$ the set $\{ 1 \leq i \leq l \mid \LM(u_ig_i) = x^\alpha\}$.
  As $x^\alpha > \LM(f)$ we necessarily have $\sum_{1 \leq i \leq l} \LT(u_i) \LT(g_i) = 0$.
  In particular $(\sum_{i \in S} e_i \LT(u_i), \mathfrak f)$ is a homogeneous pseudo-syzygy of $\Syz(\LT(g_1,\mathfrak g_1),\dots,\LT(g_l,\mathfrak g_l))$ (since $\mathfrak f \cdot \LT(u_i) \subseteq \mathfrak g_i[x]$).

  Let now $B = ((h_{1j},\dots,h_{lj}), \mathfrak h_j)$, $1 \leq j \leq r$ be the finite generating set of homogeneous pseudo-syzygies.
  By Lemma~\ref{lem:rep} we can find $f_j \in (\mathfrak h_j \mathfrak f^{-1})[x]$ with
  \[ \sum_{i \in S} e_i \LT(u_i) = \sum_{j=1}^r f_j \sum_{i=1}^l e_i h_{ij}. \]
  Since each $\LT(u_i)$ is a term, we may assume that each $f_j$ is also a term.
  Thus for all $1 \leq i \leq l, 1 \leq j \leq r$ we also have
  \[ x^\alpha = \LM(u_ig_i) = \LM(u_i)\LM(g_i) = \LM(f_j)\LM(h_{ij})\LM(g_i) .\]
  whenever $f_j h_{ij}$ is non-zero.
  By assumption, for all $1 \leq j \leq r$ the pseudo-polynomial $(\sum_{1 \leq i \leq l} h_{ij} g_i, \mathfrak h_j)$ reduces to zero with respect to $G$.
  Hence by Theorem~\ref{thm:rep} we can find $v_{ij} \in (\mathfrak
  g_i\mathfrak h_j^{-1})[x]$, $1 \leq i \leq l$, such that $\sum_{1 \leq i \leq l}
  h_{ij}g_i = \sum_{1 \leq i \leq l} v_{ij}g_i$ and
  
  \begin{align}\label{eq} \max_{1 \leq i \leq l} \LM(v_{ij}g_i) = \LM\Bigl(\sum_{i=1}^l h_{ij} gi\Bigr) < \max_{1 \leq i \leq l} \LM(h_{ij} g_i). \end{align}
  The last inequality follows from $\sum_{1 \leq i \leq l} h_{ij} \LT(g_i) = 0$.
  For the element $f$ we started with this implies
  \[ f = \sum_{i = 1}^l u_i g_i = \sum_{i\in S} \LT(u_i)g_i + \sum_{i \in S} (u_i - \LT(u_i)) g_i + \sum_{i \not\in S} u_i g_i. \]
  The first term is equal to
  \[ \sum_{i\in S} \LT(u_i)g_i = \sum_{j = 1}^r f_j \sum_{i = 1}^l h_{ij} g_i = \sum_{j = 1}^r \sum_{i = 1}^l f_j h_{ij}g_i = \sum_{j=1}^r \sum_{i=1}^l f_j v_{ij} g_i = \sum_{i=1}^l (\sum_{j=1}^r f_j v_{ij})g_i.\]
  Now $f_j \in (\mathfrak h_j \mathfrak f^{-1})[x]$, $v_{ij} \in (\mathfrak g_i \mathfrak h_j^{-1})[x]$, hence $f_j v_{ij} \in (\mathfrak g_i \mathfrak f^{-1})[x]$.
  Moreover from~(1) we have
  \[ \max_{i, j} \LM(f_j)\LM(v_{ij})\LM(g_i) < \max_j \max_i \LM(f_j)\LM(h_{ij}g_i) = x^\alpha. \]
  Thus we have found polynomials $\tilde u_i \in (\mathfrak g_i\mathfrak f^{-1})[x]$, $1 \leq i \leq l$, such that
  $\max_{1 \leq i \leq l} \LM(\tilde u_i g_i) < \max_{1 \leq i \leq l} \LM(u_i g_i)$ and $f = \sum_{1 \leq i \leq l}\tilde u_i g_i$. 
\end{proof}

%

\begin{prop}\label{prop:redtomonic}
  Let $(g_i,\mathfrak g_l)_{1 \leq i \leq l}$ be non-zero pseudo-polynomials of
  $R[x]$ and $(a_i)_{1 \leq i \leq l} \in (K^\times)^l$. Consider the map $\Phi \colon
  K[x]^l \to K[x]^l$, $\sum_{1 \leq i \leq l} e_i h_i \mapsto \sum_{1\leq i
  \leq l} e_i \frac{h_i}{a_i}$. Then the following hold:
  \begin{enumerate}
    \item
      The restriction of $\Phi$ induces an isomorphism
      \[ \Syz((g_1,\mathfrak g_1),\dotsc,(g_l,\mathfrak g_l)) \longrightarrow \Syz\left(\Bigl(a_1g_1,\frac{\mathfrak g_1}{a_1}\Bigr),\dotsc,\Bigl(a_l g_l, \frac{\mathfrak g_l}{a_l}\Bigr)\right) \]
      of $R[x]$-modules.
    \item
      If $(h, \mathfrak h)$ is a pseudo-syzygy of $\Syz((g_i,\mathfrak g_i) \mid 1 \leq i \leq l)$, then
      $(\Phi(h), \mathfrak h)$ is a pseudo-syzygy of $\Syz((a_i g_i, \frac{\mathfrak g_i}{a_i}) \mid 1 \leq i \leq l)$ and $\Phi(\langle (h, \mathfrak h) \rangle) = \langle(\Phi(h), \mathfrak h)\rangle$.
  \end{enumerate}
\end{prop}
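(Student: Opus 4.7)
The map $\Phi$ is manifestly $R[x]$-linear (it scales each component independently by an element of $K^{\times}$) and is a bijection of $K[x]^l$ with inverse $\sum e_i h_i \mapsto \sum e_i a_i h_i$. The entire proposition is therefore an exercise in checking that the fractional-ideal and syzygy conditions transport correctly under $\Phi$; there is no deep content beyond bookkeeping.

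For part~(i), I would first unpack definitions. An element of $\Syz(G)$ is a tuple $(h_1,\dotsc,h_l) \in K[x]^l$ satisfying $h_i \in \mathfrak{g}_i[x]$ for every $i$ and $\sum_i h_i g_i = 0$. Its image $\Phi(h_1,\dotsc,h_l) = (h_1/a_1,\dotsc,h_l/a_l)$ has $i$-th component lying in $\tfrac{1}{a_i}\mathfrak{g}_i[x] = (\mathfrak{g}_i/a_i)[x]$, and satisfies $\sum_i (h_i/a_i)(a_i g_i) = \sum_i h_i g_i = 0$. Hence $\Phi$ sends $\Syz(G)$ into $\Syz(G')$, where $G' = ((a_i g_i, \mathfrak{g}_i/a_i))_{1 \leq i \leq l}$. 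Running exactly the same computation with $\Phi^{-1}$ in place of $\Phi$ and with the roles of $G$ and $G'$ swapped yields the reverse inclusion $\Phi^{-1}(\Syz(G')) \subseteq \Syz(G)$. Combined with $R[x]$-linearity this gives the claimed isomorphism of $R[x]$-modules.

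For part~(ii), let $(h, \mathfrak{h})$ be a pseudo-syzygy of $G$, so by definition $\mathfrak{h} h \subseteq \Syz(G)$. Applying $\Phi$ and using (i) gives $\mathfrak{h}\,\Phi(h) = \Phi(\mathfrak{h} h) \subseteq \Phi(\Syz(G)) = \Syz(G')$, showing $(\Phi(h), \mathfrak{h})$ is a pseudo-syzygy of $G'$. For the equality of ideals, invoke Lemma~\ref{lem:rep}(ii) to identify $\langle (h, \mathfrak{h})\rangle = \mathfrak{h}[x]\,h$ and $\langle (\Phi(h), \mathfrak{h})\rangle = \mathfrak{h}[x]\,\Phi(h)$; then the $R[x]$-linearity of $\Phi$ yields $\Phi(\mathfrak{h}[x]\,h) = \mathfrak{h}[x]\,\Phi(h)$, and the two ideals coincide.

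The only place requiring any care is keeping track of where each object lives, in particular recognizing the identification $\tfrac{1}{a_i}\mathfrak{g}_i[x] = (\mathfrak{g}_i/a_i)[x]$ so that the components of $\Phi(h)$ are visibly in the correct fractional ideal. Once this is noted, both assertions reduce to direct computation.
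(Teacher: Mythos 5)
Your argument is correct and follows the same route as the paper: verify $K[x]$-linearity of $\Phi$, check that syzygies map into syzygies by the direct computation $\sum_i (h_i/a_i)(a_i g_i) = \sum_i h_i g_i = 0$ together with $\tfrac{1}{a_i}\mathfrak g_i[x] = (\mathfrak g_i/a_i)[x]$, exhibit the inverse $(h_1,\dotsc,h_l)\mapsto(a_1h_1,\dotsc,a_lh_l)$, and deduce (ii) from (i). Your treatment of (ii) is in fact slightly more explicit than the paper's one-line ``follows from (i)'', correctly invoking Lemma~\ref{lem:rep}(ii) to identify the generated submodules.
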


\begin{proof}
  (i): The map $\Phi$ is clearly $K[x]$-linear. We now show that the image of the syzygies $\Syz((g_i,\mathfrak g_i)_{1 \leq i \leq l})$ under $\Phi$ is contained in $\Syz((a_i g_i, \frac{\mathfrak g_i}{a_i})_{1 \leq i \leq l})$. To this end let $(h_1,\dotsc,h_l) \in \Syz((g_i, \mathfrak g_i)_{1 \leq i \leq l})$, that is, $\sum_{1 \leq i \leq l} h_i g_i = 0$ and $h_i \in \mathfrak g_i[x]$. But then $\sum_{1 \leq i \leq l} \frac{h_i}{a_i} a_i g_i = 0$ and $\frac{h_i}{a_i} \in (\frac{\mathfrak g_i}{a_i})[x]$, that is, $(\frac{h_1}{a_1},\dotsc,\frac{h_l}{a_l}) \in \Syz((a_i g_i, \frac{\mathfrak g_i}{a_i})_{1 \leq i \leq l})$.
  As the inverse map is given by $(h_1,\dotsc,h_l) \mapsto (a_1 h_1,\dotsc,a_l h_l)$, the claim follows.
  (ii): Follows at one from (i)..
\end{proof}

\subsection{Buchberger's algorithm}

\begin{theorem}\label{thm:syzgen}
  Let $(a_i x^{\alpha_i}, \mathfrak g_i)_{1 \leq i \leq l}$ be non-zero pseudo-polynomials, where each polynomial is a term. For $1 \leq i, j \leq l$ we define the pseudo-element
  \[ s_{ij} =\left(\left( \frac{\lcm(x^{\alpha_i}, x^{\alpha_j})}{x^{\alpha_i}} \frac{1}{a_i} e_i - \frac{\lcm(x^{\alpha_i}, x^{\alpha_j})}{x^{\alpha_j}} \frac{1}{\alpha_j} e_j \right), (a_i \mathfrak g_i \cap \alpha_j \mathfrak g_j)\right) \]
   of $K[x]^l$ and for $1 \leq k \leq l$ we set $S_k = \Syz((a_i x^{\alpha_i}, \mathfrak g_i)_{1 \leq i \leq k})$.
  Then the following hold:
  \begin{enumerate}
    \item
      For $1 \leq i, j \leq l$, $i \neq j$, the syzygies $\Syz((a_i x^{\alpha_i}, \mathfrak g_i), (\alpha_jx^{\alpha_j}, \mathfrak g_j))$ are generated by $s_{ij}$.
    \item
      If $B_{k - 1}$ is a generating set of pseudo-generators for $S_{k - 1}$, then
      \[ B = \{ ((h, 0), \mathfrak h) \mid (h, \mathfrak h) \in B_{k - 1} \} \cup \{ s_{ik} \mid 1 \leq i \leq k - 1\} \]
      is a generating set of pseudo-generators for $S_k$.
  \end{enumerate}
\end{theorem}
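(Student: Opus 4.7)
The plan is to first reduce to the monic case by applying Proposition~\ref{prop:redtomonic} with scalars $1/a_i$. This produces an $R[x]$-module isomorphism
\[
\Phi\colon\Syz\bigl((a_i x^{\alpha_i},\mathfrak g_i)_{1\le i\le l}\bigr)\longrightarrow\Syz\bigl((x^{\alpha_i},a_i\mathfrak g_i)_{1\le i\le l}\bigr),\quad e_ih\longmapsto e_i\,a_ih,
\]
whose inverse maps the pseudo-syzygy $\bigl((\lcm_{ij}/x^{\alpha_i})\,e_i-(\lcm_{ij}/x^{\alpha_j})\,e_j,\,a_i\mathfrak g_i\cap a_j\mathfrak g_j\bigr)$ in the monic setting (writing $\lcm_{ij}=\lcm(x^{\alpha_i},x^{\alpha_j})$) to precisely the pseudo-element $s_{ij}$ of the statement. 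Since by Proposition~\ref{prop:redtomonic}~(ii) this isomorphism preserves pseudo-generation, it is enough to prove the theorem assuming $a_i=1$ throughout, after renaming $\mathfrak g_i\leftarrow a_i\mathfrak g_i$.

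In this monic setup, part~(i) follows from a direct computation: a syzygy $(h_1,h_2)$ of $(x^{\alpha_i},\mathfrak g_i)$ and $(x^{\alpha_j},\mathfrak g_j)$ satisfies $h_1x^{\alpha_i}+h_2x^{\alpha_j}=0$, which by polynomial divisibility forces $h_1=f\cdot\lcm_{ij}/x^{\alpha_i}$ and $h_2=-f\cdot\lcm_{ij}/x^{\alpha_j}$ for a unique $f\in K[x]$. Since the monomial shifts are monic, the membership conditions $h_1\in\mathfrak g_i[x]$, $h_2\in\mathfrak g_j[x]$ collapse to $f\in(\mathfrak g_i\cap\mathfrak g_j)[x]$, so the syzygy module equals $(\mathfrak g_i\cap\mathfrak g_j)[x]\cdot v$, where $v$ is the underlying element of $s_{ij}$. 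By Lemma~\ref{lem:rep}~(ii), this is exactly $\langle s_{ij}\rangle$.

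For part~(ii), I would induct on $k$. By Lemma~\ref{lem:syzgenset} it suffices to show that every homogeneous pseudo-syzygy $((h_1,\dotsc,h_k),\mathfrak h)\in S_k$ lies in $\sum_{b\in B}\mathfrak h_b[x]\cdot v_b$. Setting $S=\{i:h_i\neq 0\}$ and writing $h_i=c_i\,x^{\gamma-\alpha_i}$ with $c_i\in\mathfrak g_i\mathfrak h^{-1}$ for $i\in S$, the syzygy relation becomes $\sum_{i\in S}c_i=0$. If $k\notin S$, the element sits in the embedded copy of $S_{k-1}$ and the induction hypothesis on $B_{k-1}$ applies. Otherwise, the plan is to find a decomposition $-c_k=\sum_{i\in S,\,i<k}d_i$ with $d_i\in(\mathfrak g_i\cap\mathfrak g_k)\mathfrak h^{-1}$; subtracting $\sum_i d_i\,x^{\gamma-\lcm(\alpha_i,\alpha_k)}$ times the underlying element of $s_{ik}$ then zeroes out the $k$-th coordinate (a short direct check) and leaves a pseudo-syzygy in $S_{k-1}$ with the same ideal $\mathfrak h$, to which the induction hypothesis applies.

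The existence of this decomposition of $-c_k$ is the main obstacle, and it is exactly where the Dedekind hypothesis is genuinely used. Because $R$ is locally a discrete valuation ring, its lattice of (fractional) ideals is distributive, and since $\mathfrak h^{-1}$ is invertible this gives
\[
\mathfrak g_k\mathfrak h^{-1}\cap\sum_{i\in S,\,i<k}\mathfrak g_i\mathfrak h^{-1}\;=\;\sum_{i\in S,\,i<k}(\mathfrak g_i\cap\mathfrak g_k)\mathfrak h^{-1}.
\]
The pseudo-syzygy condition places $-c_k\in\mathfrak g_k\mathfrak h^{-1}$, and the identity $-c_k=\sum_{i\in S,\,i<k}c_i$ places $-c_k$ in the summand on the left-hand side as well; consequently $-c_k$ lies in the right-hand side, and the required $d_i$ exist. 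Without distributivity of the ideal lattice the argument collapses, which is precisely why the statement is genuinely a theorem about Dedekind rather than merely Noetherian coefficient rings.
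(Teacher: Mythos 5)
Your proof is correct and follows essentially the same route as the paper's: reduction to the monic case via Proposition~\ref{prop:redtomonic}, a direct divisibility computation for the two-generator case, and an induction on $k$ that removes the $k$-th coordinate by a combination of the $s_{ik}$, using the identity $\bigl(\sum_{i}\mathfrak g_i\bigr)\cap\mathfrak g_k=\sum_{i}(\mathfrak g_i\cap\mathfrak g_k)$. The only notable difference is that you explicitly identify the distributivity of the ideal lattice of a Dedekind domain as the decisive ingredient, a point the paper uses without comment.
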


\begin{proof}
  By Proposition~\ref{prop:redtomonic} we are reduced to the monic case, that is, $a_i = 1$ for $1 \leq i \leq l$.

  (i): It is clear that $s_{ij}$ is a pseudo-syzygy of $((x^{\alpha_i}, \mathfrak g_i), (x^{\alpha_j}, \mathfrak g_j))$.
  Let now $((h_i, h_j), \mathfrak h)$ be a homogeneous pseudo-syzygy with $h_i =
  b_i x^{\beta_i}$, $h_j = b_j x^{\beta_j}$, $\mathfrak h h_i \subseteq \mathfrak
  g_i$ and $\mathfrak h h_j \subseteq \mathfrak g_j$.  We may further assume
  that $b_i \neq 0 \neq b_j$. In particular $x^{\alpha_i}x^{\beta_i} = x^{\alpha_j} x^{\beta_j}$
  and we can write $x^{\beta_i} = \lcm(x^{\alpha_i}, x^{\alpha_j})/x^{\alpha_i} \cdot x^\beta$, $x^{\beta_j} =
  \lcm(x^{\alpha_i}, x^{\alpha_j})/x^{\alpha_j} \cdot x^{\beta}$ for some monomial $x^\beta$.
  We obtain
  \[(h_i, h_j) = x^\beta \left(b_i \frac{\lcm(x^{\alpha_i}, x^{\alpha_j})}{x^{\alpha_i}}, b_j \frac{\lcm(x^{\alpha_i}, x^{\alpha_j})}{x^{\alpha_j}}\right)
  = x^\beta b_i \left(\frac{\lcm(x^{\alpha_i}, x^{\alpha_j})}{x^{\alpha_i}}, - \frac{\lcm(x^{\alpha_i}, x^{\alpha_j})}{x^{\alpha_j}}\right), \]
  where the last equality follows from $b_i + b_j = 0$.
  As $b_i \mathfrak h \subseteq \mathfrak g_i$, $b_i \mathfrak h = b_j \mathfrak h \subseteq \mathfrak g_j$ we obtain $b_i \mathfrak h \subseteq \mathfrak g_i \cap \mathfrak g_j$.
  Thus $\langle ((h_i, h_j), \mathfrak h) \rangle_{R[x]} \subseteq \langle s_{ij} \rangle_{R[x]}$.
  The claim now follows from Lemma~\ref{lem:syzgenset}.

  (ii):
  We start again with a homogeneous pseudo-syzygy $(h, \mathfrak h) = ((h_1,\dotsc,h_{k}), \mathfrak h)$ of $S_k$ of degree $x^\beta$.
  We write $h_i = b_i x^{\beta_i}$ with $\mathfrak h \mathfrak b_i \subseteq \mathfrak g_i$, $1 \leq i \leq k$.
  Since in case $b_k = 0$ we have that $((h_1,\dotsc,h_{k-1}), \mathfrak h)$ is a pseudo-syzygy in $S_{k-1}$, we can assume that $b_k \neq 0$.
  Let $J = \{ i \mid 1 \leq i \leq k - 1, \ b_i \neq 0\}$.
  Since $(h, \mathfrak h)$ is homogeneous, we have $x^{\beta_i} x^{\alpha_i} = x^\beta$ for all $i \in J \cup \{ k \}$ and in particular $\lcm(x^{\alpha_i}, x^{\alpha_k}) \mid x^\beta$ for all $i \in J$.
  Furthermore we have $b_k = - \sum_{i \in J} b_i \in \sum_{i \in J} \langle b_i \rangle_R$ and hence
  $\mathfrak h b_k \subseteq \sum_{i \in J} \mathfrak h b_i \subseteq \sum_{i \in J} \mathfrak g_i$.
  Since at the same time it holds that $\mathfrak h b_k \subseteq \mathfrak g_k$, we conclude that
  $\mathfrak h b_k \subseteq (\sum_{i \in J} \mathfrak g_i) \cap \mathfrak g_k = \sum_{i \in J} (\mathfrak g_i \cap \mathfrak g_k)$.
  Hence there exist $c_i \in (\mathfrak g_i \cap \mathfrak b_k) \mathfrak b^{-1}$, $i \in J$, such that $b_k = - \sum_{i \in J} c_i$.
  For $1 \leq i, j \leq k$ let us denote $\lcm(x^{\alpha_i}, x^{\alpha_j})$ by $x^{\alpha_{ij}}$.
  Now as $x^{\beta}/x^{\alpha_{ik}} \cdot x^{\alpha_{ik}}/x^{\alpha_i} = x^{\beta_i}$ we obtain
  \begin{align*}
    b_k x^{\beta_k} e_k = \sum_{i \in J} - c_i \frac{x^{\beta}}{x^{\alpha_k}}  e_k  &= \sum_{i \in J} - c_i \frac{x^{\beta}}{x^{\alpha_{ik}}} \frac{x^{\alpha_{ik}}}{x^{\alpha_k}} e_k 
    \\ &= \sum_{i \in J} c_i \frac{x^\beta}{x^{\alpha_{ik}}} \left( \frac{x^{\alpha_{ik}}}{x^{\alpha_i}} e_i - \frac{x^{\alpha_{ik}}}{x^{\alpha_k}} e_k\right) - \sum_{i \in J} c_i x^{\beta_i} e_i.
  \end{align*} 
  Hence
  \[ h = \sum_{i=1}^l e_i b_i x^{\beta_i} = \sum_{i=1}^{l-1} e_i b_i x^{\beta_i}
                                       - \sum_{i \in J} c_i x^{\beta_i} e_i
                                       + \sum_{i \in J} c_i \frac{x^\beta}{x^{\alpha_{ik}}} \left( \frac{x^{\alpha_{ik}}}{x^{\alpha_i}} e_i - \frac{x^{\alpha_{ik}}}{x^{\alpha_k}} e_k \right). \]
  We set
  \[ \tilde{h} = \sum_{i=1}^{l-1} e_i b_i x^{\beta_i} - \sum_{i \in J} c_i x^{\beta_i} e_i \quad\text{and}\quad  \tilde{\tilde h} =  \sum_{i \in J} c_i \frac{x^\beta}{x^{\alpha_{ik}}} \left( \frac{x^{\alpha_{ik}}}{x^{\alpha_i}} e_i - \frac{x^{\alpha_{ik}}}{x^{\alpha_k}} e_k \right). \]
  By construction, for all $i \in J$ we have $\mathfrak h c_i \subseteq \mathfrak g_i \cap
\mathfrak g_k$. Together with $J \subseteq \{ 1,\dotsc,k-1\}$ this implies
$\langle (\tilde{\tilde h}, \mathfrak h)_{R[x]} \rangle \subseteq \langle
s_{ik} \mid 1 \leq i \leq k - 1 \rangle_{R[x]}$.
  Let $\Phi \colon \sum_{1 \leq i \leq } e_i h_i \mapsto \sum_{1 \leq i \leq l} h_i g_i$.
  As $h, \tilde{\tilde{h}} \in \ker(\Phi)$, the same holds for $\tilde h$.
  Using again the property $\mathfrak h c_i \subseteq \mathfrak g_i \cap \mathfrak g_k \subseteq \mathfrak g_i$ we conclude that $(\tilde h, \mathfrak h)$ is a pseudo-syzygy of $(g_i, \mathfrak g_i)_{1 \leq i \leq l - 1}$. 
  In particular $\langle ((\tilde h, 0), \mathfrak h) \rangle_{R[x]} \subseteq \langle ((h, 0), \mathfrak h) \mid (h, \mathfrak h) \in B_{k - 1}\rangle_{R[x]}$.
 Invoking again Lemma~\ref{lem:syzgenset}, this proves the claim.
\end{proof}

\begin{definition}
  Let $(f, \mathfrak f)$, $(g, \mathfrak g)$ be two non-zero pseudo-polynomials of $R[x]$. We call
  \[ \left(\left( \frac{\lcm(\LM(f), \LM(g))}{\LM(f)} \frac{1}{\LC(f)} f - \frac{\lcm(\LM(f), \LM(g))}{\LM(g)} \frac{1}{\LC(g)} g \right), \LC(f)\mathfrak f \cap \LC(g)\mathfrak g\right) \]
  the \textit{S-polynomial} of $(f, \mathfrak g)$, $(g, \mathfrak g)$ and denote it by
  $\spoly((f, \mathfrak f), (g, \mathfrak g))$.
\end{definition}

We can now give the analogue of the classical Buchberger criterion in the case of Dedekind domains.

\begin{corollary}\label{cor:buchcrit}
  Let $G = \{ (g_i, \mathfrak g_i) \mid 1 \leq i \leq l\}$ be non-zero pseudo-polynomials of $R[x]$.
  Then $G$ is a Gröbner basis of $\langle G \rangle$ if and only if $\spoly((g_i, \mathfrak g_i), (g_j,\mathfrak g_j))$ reduces to $0$ modulo $G$ for all $1 \leq i < j \leq l$.
\end{corollary}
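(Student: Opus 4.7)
The plan is to deduce the criterion by combining Theorem~\ref{thm:groebcond2}, which reduces the Gröbner basis condition to checking reduction of finitely many pseudo-polynomials built from a generating set of $\Syz(\LT(g_1,\mathfrak g_1),\dotsc,\LT(g_l,\mathfrak g_l))$, with Theorem~\ref{thm:syzgen}, which provides exactly such a generating set in terms of the $s_{ij}$. Writing $a_i = \LC(g_i)$ and $x^{\alpha_i} = \LM(g_i)$, the pseudo-polynomials $(\LT(g_i),\mathfrak g_i) = (a_i x^{\alpha_i},\mathfrak g_i)$ fall exactly into the setting of Theorem~\ref{thm:syzgen}.

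First I would apply Theorem~\ref{thm:syzgen}(ii) inductively on $k = 2,\dotsc,l$, with Theorem~\ref{thm:syzgen}(i) as the base case, to conclude that $\{s_{ij} \mid 1 \leq i < j \leq l\}$ is a generating set of homogeneous pseudo-syzygies of $\Syz((\LT(g_i),\mathfrak g_i)_{1 \leq i \leq l})$. Theorem~\ref{thm:groebcond2} then reduces the statement ``$G$ is a pseudo-Gröbner basis of $\langle G\rangle$'' to checking, for each pair $1 \leq i < j \leq l$, that if $s_{ij} = ((h_1,\dotsc,h_l),\, a_i \mathfrak g_i \cap a_j \mathfrak g_j)$ then the pseudo-polynomial $(\sum_k h_k g_k,\, a_i \mathfrak g_i \cap a_j \mathfrak g_j)$ reduces to $0$ modulo $G$.

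The remaining step is to unfold the components of $s_{ij}$ and recognize the resulting object as the S-polynomial of the definition. Substituting the explicit formula for $s_{ij}$ yields
\[ \sum_k h_k g_k = \frac{\lcm(\LM(g_i),\LM(g_j))}{\LM(g_i)}\,\frac{1}{\LC(g_i)}\, g_i - \frac{\lcm(\LM(g_i),\LM(g_j))}{\LM(g_j)}\,\frac{1}{\LC(g_j)}\, g_j, \]
together with the fractional ideal $a_i \mathfrak g_i \cap a_j \mathfrak g_j = \LC(g_i)\mathfrak g_i \cap \LC(g_j)\mathfrak g_j$, which matches the definition of $\spoly((g_i,\mathfrak g_i),(g_j,\mathfrak g_j))$ verbatim.

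I expect the only mildly delicate point to be the bookkeeping in the inductive use of Theorem~\ref{thm:syzgen}(ii): one must check that the previously-produced generators of $S_{k-1}$ embed into $S_k$ via $(h_1,\dotsc,h_{k-1}) \mapsto (h_1,\dotsc,h_{k-1},0)$ and that no new generators beyond $\{s_{ik}\}_{i<k}$ appear at step $k$. Both assertions are given directly by the theorem, so once the identification of $s_{ij}$ with $\spoly((g_i,\mathfrak g_i),(g_j,\mathfrak g_j))$ is in hand the equivalence is immediate.
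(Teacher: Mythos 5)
Your proposal is correct and follows the paper's own argument: apply Theorem~\ref{thm:syzgen}(i) and (ii) inductively to get $\{s_{ij}\}_{i<j}$ as a homogeneous generating set of the syzygies of the leading terms, then invoke Theorem~\ref{thm:groebcond2} and identify the resulting pseudo-polynomials with the S-polynomials. Your write-up is in fact slightly more careful than the paper's two-line proof, since you correctly apply Theorem~\ref{thm:syzgen} to the leading-term pseudo-polynomials $(\LT(g_i),\mathfrak g_i)$ rather than to $G$ itself, and you make the identification of $s_{ij}$ with $\spoly((g_i,\mathfrak g_i),(g_j,\mathfrak g_j))$ explicit.
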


\begin{proof}
  Applying Theorem~\ref{thm:syzgen}~(ii) inductively using (i) as the base case shows that the set $\{ s_{ij} \mid 1 \leq i < j \leq l\}$ is a of homogeneous pseudo-syzygies generating $\Syz(G)$.
  The claim now follows from Theorem~\ref{thm:groebcond2}.
\end{proof}

\begin{algorithm}\label{alg:groeb}
  Given a family $F = (f_i, \mathfrak f_i)_{1 \leq i \leq l}$ of non-zero pseudo-polynomials, the following steps return a Gröbner basis $G$ of $\langle F \rangle$.
\begin{enumerate}
   \item
     We initialize $\tilde G$ as $\{ ((f_i, \mathfrak f_i), (f_j, \mathfrak f_j)) \mid 1 \leq i < j \leq l \}$ and $G = F$.
   \item
     While $\tilde G \neq \emptyset$, repeat the following steps:
     \begin{enumerate}
       \item
         Pick $((f, \mathfrak f), (g, \mathfrak g)) \in \tilde G$ and compute $(h, \mathfrak h)$ minimal with respect to $G$ such that $\spoly((f, \mathfrak f), (g, \mathfrak g)) \xrightarrow{G}_+ (h, \mathfrak h)$.
       \item
         If $h \neq 0$, set $\tilde G = \tilde G \cup \{((f, \mathfrak f), (h, \mathfrak h)) \mid (f, \mathfrak f) \in G \}$ and $G = G \cup \{ (h, \mathfrak h) \}$.
     \end{enumerate}
   \item
     Return $G$.
\end{enumerate}
\end{algorithm}

\begin{proof}[Algorthm~\ref{alg:groeb} is correct]
  By Corollary~\ref{cor:buchcrit} it is sufficient to show that the algorithm terminates.
  But termination follows as in the field case by considering the ascending chain of leading
  term ideals $\Lt(G)$ (in the Noetherian ring $R[x]$) and using Lemma~\ref{lem:3}.
\end{proof}

\subsection{Product criterion}

For Gröbner basis computations a bottleneck of Buchberger's algorithm
is the reduction of the $S$-polynomials and the number of $S$-polynomials one has to consider.
Buchberger himself gave criteria under which certain $S$-polynomials will reduce to $0$.
In \cite{Moller1988, Lichtblau2012} they have been adapted to coefficient rings that are principal ideal rings and Euclidean domains respectively.
We will now show that the product criterion can be easily translated to the setting of pseudo-Gröbner bases.
Recall that in the case $R$ is a principal ideal domain, the product criterion reads as follows: If $f, g$ are non-zero polynomials in $R[x]$ such that $\GCD(\LC(f), \LC(g)) = 1$ and $\GCD(\LM(f), \LM(g)) = 1$, then the $S$-polynomial $\spoly(f, g)$ reduces to zero modulo $\{f, g\}$.

\begin{theorem}
  Let $(f, \mathfrak f)$, $(g, \mathfrak g)$ be pseudo-polynomials of $R[x]$ such that $\LM(f)$ and $\LM(g)$ are coprime in $K[x]$ and $\LC(f, \mathfrak f)$ and $\LC(g, \mathfrak g)$ are coprime ideals of $R$. Then the $S$-polynomial $\spoly((f, \mathfrak f), (g, \mathfrak g))$ reduces to $0$ modulo $\{(f, \mathfrak f), (g, \mathfrak g)\}$.
\end{theorem}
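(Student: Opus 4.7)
The strategy is to exhibit an explicit representation $\spoly((f,\mathfrak{f}),(g,\mathfrak{g})) = q_1 f + q_2 g$ enjoying good degree control and to iterate one-step reductions using it.

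First I would unfold the definition. By monomial coprimality, $\lcm(\LM(f),\LM(g)) = \LM(f)\LM(g)$, so the $S$-polynomial simplifies to $s = \frac{\LM(g)}{\LC(f)} f - \frac{\LM(f)}{\LC(g)} g$, attached to $\mathfrak{h} = \LC(f)\mathfrak{f} \cap \LC(g)\mathfrak{g}$. The elementary identity
\[ \LT(g)\, f - \LT(f)\, g \ =\ -(g - \LT(g))\, f + (f - \LT(f))\, g \]
then yields
\[ s = q_1 f + q_2 g, \qquad q_1 := -\tfrac{g - \LT(g)}{\LC(f)\LC(g)}, \quad q_2 := \tfrac{f - \LT(f)}{\LC(f)\LC(g)}. \]

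Second, I would exploit the coprimality of the leading coefficient ideals, $\LC(f)\mathfrak{f} + \LC(g)\mathfrak{g} = R$, to compute $\mathfrak{h} = \LC(f)\LC(g)\,\mathfrak{f}\mathfrak{g}$ (coprime ideals intersect in their product). Thus $\mathfrak{f}\mathfrak{h}^{-1} = (\LC(f)\LC(g))^{-1}\mathfrak{g}^{-1}$, and because $\mathfrak{g} g \subseteq R[x]$ forces the coefficients of $g$ into $\mathfrak{g}^{-1}$, we obtain $q_1 \in (\mathfrak{f}\mathfrak{h}^{-1})[x]$; symmetrically $q_2 \in (\mathfrak{g}\mathfrak{h}^{-1})[x]$.

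Third, I would compare leading monomials in the representation. We have $\LM(q_1 f) = \LM(g - \LT(g))\LM(f) < \LM(g)\LM(f)$ and $\LM(q_2 g) = \LM(f - \LT(f))\LM(g) < \LM(f)\LM(g)$. These two monomials must be distinct: the equality $\LM(q_1 f) = \LM(q_2 g)$ would, by coprimality of $\LM(f)$ and $\LM(g)$, force $\LM(g) \mid \LM(q_1) = \LM(g - \LT(g))$, hence $\LM(g - \LT(g)) \geq \LM(g)$, a contradiction. In particular $\LT(s)$ equals either $\LT(q_1)\LT(f)$ or $\LT(q_2)\LT(g)$ without cancellation at the top.

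Finally I would conclude by iterating one-step reductions. Say $\LT(s) = \LT(q_1)\LT(f)$; then $\LC(s) = \LC(q_1)\LC(f)$ with $\LC(q_1) \in \mathfrak{f}\mathfrak{h}^{-1}$, so Lemma~\ref{lem:red} licenses a one-step reduction modulo $(f,\mathfrak{f})$ that subtracts $\LT(q_1) f$, replacing the representation by $(q_1 - \LT(q_1)) f + q_2 g$. The new $q_1$ still satisfies $\LM(q_1) < \LM(g)$, so the distinctness argument reapplies verbatim at the next step. Since each reduction removes a term from $q_1$ or $q_2$, the procedure terminates in $0$ after finitely many steps. The main obstacle is the bookkeeping in this last paragraph: one must verify that the no-cancellation structure and the required ideal memberships are preserved throughout the iteration, which in turn follows from the uniform bounds $\LM(q_1^{(i)}) < \LM(g)$ and $\LM(q_2^{(i)}) < \LM(f)$ maintained by the reduction.
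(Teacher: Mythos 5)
Your proof is correct and follows essentially the same route as the paper's: both rewrite the $S$-polynomial via the identity $\LT(g)f-\LT(f)g=f'g-g'f$ (with $f',g'$ the tails), observe that coprimality of the leading monomials prevents cancellation between the two summands, and then peel off one term of $q_1$ or $q_2$ per reduction step. You are in fact somewhat more careful than the paper, which silently replaces the intersection $\LC(f)\mathfrak f\cap\LC(g)\mathfrak g$ by the product and skips the case analysis you fold into the uniform representation $s=q_1f+q_2g$.
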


\begin{proof}
  Denote by $f'$ and $g'$ the tails of $f$ and $g$ respectively.
  We consider three cases.

  In the first case, let both $f$ and $g$ be terms. Then their $S$-polynomial will be $0$ be definition.

  Consider next the case in which $f$ is a term and $g$ is not.
  Then a quick calculation shows that
  \[ (s, \mathfrak s) = \spoly((f, \mathfrak f), (g, \mathfrak g)) = \left(-\frac{1}{\LC(f)\LC(g)} g' f, \LC(f) \mathfrak f \cdot \LC(g) \mathfrak g)\right). \]
  We want to show that $(s, \mathfrak s)$ reduces modulo $\{(f, \mathfrak f)\}$.
  Since $\LM(f)$ divides $\LM(h)$ by definition it is sufficient to show that $\LC(s, \mathfrak s) \subseteq \LC(f, \mathfrak f)$, which is equivalent to
  $\LC(g') \LC(f) \mathfrak f \mathfrak g \subseteq \LC(f) \mathfrak f$.
  But this follows from $\LC(g')\mathfrak g \subseteq R$.
  Hence $(s, \mathfrak s)$ reduces modulo $(f, \mathfrak f)$ to
  \[ \left(s - \frac{\LT(g')}{\LC(g)\LC(f)} f, \LC(f)\mathfrak f \cdot \LC(g) \mathfrak g\right) = \left(-\frac{1}{\LC(f)\LC(g)}(g' - \LT(g'))f, \LC(f)\mathfrak f \cdot \LC(g) \mathfrak g\right). \]
  Applying this procedure recursively, we see that $(s, \mathfrak s)$ reduces to $0$ modulo $\{(f, \mathfrak f)\}$.

  Now consider the case, where $f$ and $g$ are both not terms, that is, $f' \neq 0 \neq g'$.
  Then the $S$-polynomial of $(f, \mathfrak f)$ and $(g, \mathfrak g)$ is equal to
  \[ (s, \mathfrak s) = \left( (\frac{\LT(g)}{\LC(f)} f - \frac{\LT(f)}{\LC(g)} g), \LC(f) \mathfrak f \LC(g) \mathfrak g \right)= (\frac{1}{\LC(f)\LC(g)} (f' g - g' f), \LC(f) \mathfrak f \LC(g) \mathfrak g). \]
  Since $\LM(f)$ and $\LM(g)$ are coprime, we have $\LM(f'g) \neq \LM(g'f)$ and therefore $\LM(s)$ is either $\LM(f' g)$ or $\LM(g' f)$.
  In particular $\LM(s)$ is either a multiple of $\LM(f)$ or $\LM(g)$.
  If $\LM(s) = \LM(g' f)$
  then $\LC(s) = \-\LC(g')/\LC(g)$ and $\LC(s, \mathfrak s) = \LC(g') \LC(f) \mathfrak f \mathfrak g$.
  As in third case, $(s, \mathfrak s)$ reduces to
  \begin{align*} &\left(- \frac{1}{\LC(f)\LC(g)} (f' g - g' f) - \frac{\LT(g')}{\LC(g)\LC(f)} f, \LC(f)\mathfrak f \LC(g)\mathfrak g\right)\\
    = &\left(-\frac{1}{\LC(f)\LC(g)} ( f' g - (g' - \LT(g')))f, \LC(f)\mathfrak f \LC(g)\mathfrak g\right),
  \end{align*}
  and similar in the other case.
  Note that again, the leading monomial of $(f'g - (g' - \LT(g')) f)$ is a multiple of $\LM(f)$ and $\LM(g)$.
  Inductively this shows that $(s, \mathfrak s) \xrightarrow{\{(f,\mathfrak f), (g, \mathfrak g)\}}_+ 0$.\qedhere
\end{proof}

\section{Coefficient reduction}

Although in contrast to $\Q[x]$ the naive Gröbner basis computation of
an ideal $I$ of $\Z[x]$ is free of denominators, the problem of quickly growing
coefficients is still present.
In case a non-zero element $N \in I \cap \Z$ is known this problem can be
avoided: By adding $N$ to the generating set under consideration, all
intermediate results can be reduced modulo $N$, leading to tremendous improvements in runtime, see ~\cite{Eder2018}.

In this section we will describe a similar strategy for the computation of
pseudo-Gröbner bases in case the coefficient ring is the ring of integers of a
finite number field. Although this is
quite similar to the integer case, we now have to deal with the growing size of
the coefficients of polynomials themselves as well as with the size of the
coefficient ideals.

\subsection{Admissible reductions}
We first describe the reduction operations that are allowed during a Gröbner
basis computation for arbitrary Dedekind domains.

\begin{prop}\label{prop:red}
  Let $R$ be a Dedekind domain, and $(f, \mathfrak f)$ a non-zero pseudo-polynomials of $R[x]$.
  \begin{enumerate}
    \item
      If $(g, \mathfrak g)$ is a pseudo-polynomial of $R[x]$ with $\mathfrak f
    f = \mathfrak g g$, then $(f, \mathfrak f)$ reduces to $0$ modulo $(g,
  \mathfrak g)$.
    \item
      Write $f = \sum_{1 \leq i \leq d} c_{\alpha_i} x^{\alpha_i}$ with $c_{\alpha_i} \neq 0$.
      Assume that $g = \sum_{1 \leq i \leq d} \bar c_{\alpha_i} x^{\alpha_i} \in R[x]$ is a polynomial and $\mathfrak N$ a fractional ideal of
      $R$ such that $c_{\alpha_i} - \bar c_{\alpha_i} \in
      \mathfrak N \mathfrak f^{-1}$ for $1 \leq i \leq d$.
      Then $f$ reduces to $0$ modulo $((g,
      \mathfrak f), (1, \mathfrak N))$.
  \end{enumerate}
\end{prop}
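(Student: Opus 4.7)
The plan is to dispatch (i) with a single reduction step and to handle (ii) by inductively peeling off the leading term of $f$, branching on whether $\bar c_{\alpha_d}$ vanishes.

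For (i), the key observation is that $\mathfrak f f = \mathfrak g g$ forces $f$ and $g$ to be scalar multiples in $K[x]$: choosing any non-zero $\alpha \in \mathfrak f$ yields $\alpha f = \beta g$ for some $\beta \in \mathfrak g$, so $f = cg$ with $c = \beta\alpha^{-1} \in \mathfrak g\mathfrak f^{-1}$. Consequently $\LM(f) = \LM(g)$ and $\LC(f) = c\LC(g)$, so Lemma~\ref{lem:red} applies with $a = c$ and the single reduction step produces $f - cg = 0$.

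For (ii), I would order the exponents so that $\alpha_1 < \dots < \alpha_d = \deg(f)$, set $d_i = c_{\alpha_i} - \bar c_{\alpha_i} \in \mathfrak N\mathfrak f^{-1}$, and proceed by induction on $d$. If $\bar c_{\alpha_d} \neq 0$, then $\LM(g) = \LM(f)$ and both generators of $G = \{(g,\mathfrak f),(1,\mathfrak N)\}$ participate in the reduction; the containment
\[ \mathfrak f c_{\alpha_d} \subseteq \mathfrak f \bar c_{\alpha_d} + \mathfrak N = \LC(g,\mathfrak f) + \LC(1,\mathfrak N) \]
together with Lemma~\ref{lem:red} (with $a_1 = 1 \in \mathfrak f\mathfrak f^{-1} = R$ and $a_2 = d_d \in \mathfrak N\mathfrak f^{-1}$) provides a one-step reduction to $\sum_{i<d} d_i x^{\alpha_i}$, every coefficient of which lies in $\mathfrak N\mathfrak f^{-1}$; such a polynomial can then be cleared one term at a time using $(1,\mathfrak N)$ alone. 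If $\bar c_{\alpha_d} = 0$, then $c_{\alpha_d} = d_d \in \mathfrak N\mathfrak f^{-1}$, so reducing by $(1,\mathfrak N)$ alone strips off the leading term to yield $\sum_{i<d} c_{\alpha_i}x^{\alpha_i}$; this still satisfies the hypothesis with the same $g$ and one fewer non-zero term, and the inductive hypothesis closes the case.

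I do not anticipate any deep obstacle. The only bookkeeping to keep in mind is verifying at every reduction step that the multipliers $a_i$ lie in the fractional ideal $\mathfrak g_i\mathfrak f^{-1}$ demanded by Lemma~\ref{lem:red}, which is immediate from the defining inclusion $d_i \in \mathfrak N\mathfrak f^{-1}$. A minor side point is that the tail sweep in the first case is a special instance of the general fact that a pseudo-polynomial whose polynomial coefficients all lie in $\mathfrak N\mathfrak f^{-1}$ reduces to $0$ modulo $\{(1,\mathfrak N)\}$, itself a one-line induction on the number of non-zero terms via the same peeling argument.
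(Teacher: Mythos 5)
Your proof is correct and follows essentially the same route as the paper's: a single reduction step for (i), and for (ii) the same case split on whether the leading coefficient of $g$ survives, followed by sweeping the remaining tail (all of whose coefficients lie in $\mathfrak N\mathfrak f^{-1}$) using $(1,\mathfrak N)$ alone. One small repair in (i): the membership $c=\beta\alpha^{-1}\in\mathfrak g\mathfrak f^{-1}$ does not follow merely from $\beta\in\mathfrak g$ and $\alpha\in\mathfrak f$ (in general $\alpha^{-1}\notin\mathfrak f^{-1}$); instead, cancel $g$ from $(c\mathfrak f)g=\mathfrak f f=\mathfrak g g$ to get $c\mathfrak f=\mathfrak g$, whence $c\in c\mathfrak f\mathfrak f^{-1}=\mathfrak g\mathfrak f^{-1}$.
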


\begin{proof}
  (i): By assumption $\LM(f) = \LM(g)$. Moreover, as $\frac{\LC(f)}{\LC(g)} \in
  \mathfrak g \mathfrak f^{-1}$ we see that $(f, \mathfrak f)$ reduces to
  \[ \left(f - \frac{\LC(f)}{\LC(g)}\frac{\LM(f)}{\LM(g)} g, \mathfrak f\right) = (0, \mathfrak f). \]
  (ii): 
  We first consider the case that $\LM(f) \neq \LM(g)$. By assumption this implies that $\LC(f) \in \mathfrak N \mathfrak f^{-1}$ and
  $(f, \mathfrak f)$ reduces to $(f - \LC(f)\LM(f), \mathfrak f)$ modulo $(1, \mathfrak N)$.
  Since we also have $(f - \LC(f)\LM(f)) - g \in \mathfrak N\mathfrak f^{-1}[x]$,
  we now may assume that $(f - \LC(f)\LM(f)) = 0$, in which case we are finished, or
  $\LM(f) = \LM(g)$.
  In the latter case, we use $\LC(f) - \LC(g) \in \mathfrak N\mathfrak f^{-1}$ and
  $\LC(f) = 1 \cdot \LC(g) + (\LC(f) - \LC(g))\cdot 1$
  to conclude that $(f, \mathfrak f)$ reduces to $(\tilde f, \mathfrak f)$ modulo $\{(g, \mathfrak f), (1, \mathfrak N)\}$,
  where $\tilde f = f - g - (\LC(f) - \LC(g))\LM(f)$.
  Since the polynomial $\tilde f$ satisfies $\tilde f \in \mathfrak N\mathfrak f^{-1}[x]$, it reduces to $0$ modulo $(1, \mathfrak N)$.
\end{proof}

Since our version of Buchberger's algorithm rests on S-polynomials reducing to $0$ (see Corollary~\ref{cor:buchcrit}),
the previous result immediately implies the correctness of the following modification of Algorithm~\ref{alg:groeb}.

\begin{corollary}\label{cor:red}
  Assume that $F = (f_i,\mathfrak f_i)_{1 \leq i \leq l}$ is family of pseudo-polynomials, such that $\langle F \rangle$ contains a non-zero ideal $\mathfrak N$ of $R$. 
  After adding $(1, \mathfrak N)$ to $F$, in Algorithm~\ref{alg:groeb} include the following Step after (a):
  \begin{enumerate}
    \item[(a')]
      Let $(g_1, \mathfrak g_1)$ be a non-zero pseudo-polynomial
      with $\mathfrak g_1 g_1  = \mathfrak h h$. 
      Now let $g_1 = \sum_{i}c_{\alpha_i}x^{\alpha_i}$ with $c_{\alpha_i} \neq 0$.
      Find a polynomial $g_2 = \sum_{i}\bar c_{\alpha_i} x^{\alpha_i}$ with $c_{\alpha_i} - \bar c_{\alpha_i} \in \mathfrak N\mathfrak g^{-1}[x]$ for all $i$ and replace $(h, \mathfrak h)$ by $(g_2, \mathfrak g_1)$.
  \end{enumerate}
  Then the resulting algorithm is still correct.
\end{corollary}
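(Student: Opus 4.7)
The plan is to verify that the modification preserves the three ingredients needed for Algorithm~\ref{alg:groeb} to be correct: the invariant $\langle G \rangle = \langle F \rangle$, termination, and the property that upon termination every $S$-polynomial of elements of $G$ reduces to $0$ modulo $G$. Correctness then follows from Corollary~\ref{cor:buchcrit}.

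First I would verify that the invariant $\langle G \rangle = \langle F \rangle$ is maintained. The original iterations preserve it by Lemma~\ref{lem:2}, and adding $(1, \mathfrak{N})$ is harmless since $\mathfrak{N} \subseteq \langle F \rangle$ by hypothesis. For the new step (a'), Proposition~\ref{prop:red} is the key tool: part~(i), applied to the equality $\mathfrak{h} h = \mathfrak{g}_1 g_1$, shows that $(h, \mathfrak{h})$ and $(g_1, \mathfrak{g}_1)$ each reduce to $0$ modulo the other; part~(ii), applied both with $f = g_1, g = g_2$ and with the roles interchanged, shows that each of $(g_1, \mathfrak{g}_1)$ and $(g_2, \mathfrak{g}_1)$ reduces to $0$ modulo the other together with $(1, \mathfrak{N})$. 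Since $(1, \mathfrak{N}) \in G$, Lemma~\ref{lem:2} yields that the replacement preserves $\langle G \rangle$.

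Second I would establish termination by showing that $\Lt(G)$ strictly grows whenever a non-zero $(g_2, \mathfrak{g}_1)$ is appended. The equality $\mathfrak{g}_1 g_1 = \mathfrak{h}h$ forces $\LM(g_1) = \LM(h)$ and $\mathfrak{g}_1 \LC(g_1) = \mathfrak{h} \LC(h)$, so $\Lt(g_1, \mathfrak{g}_1) = \Lt(h, \mathfrak{h})$; minimality of $(h, \mathfrak{h})$ combined with Lemma~\ref{lem:3} gives $\Lt(g_1, \mathfrak{g}_1) \not\subseteq \Lt(G)$. If $\LM(g_2) < \LM(g_1)$, then $\LC(g_1) \in \mathfrak{N}\mathfrak{g}_1^{-1}$, forcing $\Lt(g_1, \mathfrak{g}_1) \subseteq \mathfrak{N}[x] \subseteq \Lt((1, \mathfrak{N})) \subseteq \Lt(G)$, a contradiction. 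Hence $\LM(g_2) = \LM(g_1)$, and the decomposition $\LC(g_1) = \LC(g_2) + (\LC(g_1) - \LC(g_2))$ together with $\LC(g_1) - \LC(g_2) \in \mathfrak{N}\mathfrak{g}_1^{-1}$ gives $\Lt(g_1, \mathfrak{g}_1) \subseteq \Lt(g_2, \mathfrak{g}_1) + \Lt((1, \mathfrak{N}))$. Combined with $\Lt(g_1, \mathfrak{g}_1) \not\subseteq \Lt(G)$, this forces $\Lt(g_2, \mathfrak{g}_1) \not\subseteq \Lt(G)$. Noetherianity of $R[x]$ closes out termination.

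Third, upon termination every pair of elements of $G$ has had its $S$-polynomial reduced to $0$ modulo $G$: if the residue $(h, \mathfrak{h})$ computed in step (a) was zero, this is immediate, and otherwise the replacement $(g_2, \mathfrak{g}_1)$ was added to $G$, so the chain $\spoly \xrightarrow{G}_+ (h, \mathfrak{h}) \xrightarrow{\{(g_2, \mathfrak{g}_1), (1, \mathfrak{N})\}}_+ 0$ obtained from Proposition~\ref{prop:red} lives entirely within reductions modulo the final $G$. By Corollary~\ref{cor:buchcrit}, the final $G$ is a pseudo-Gröbner basis of $\langle G \rangle = \langle F \rangle$. The main obstacle is the termination step: the replacement in (a') a priori threatens the monotonicity of $\Lt(G)$ that drives termination of Algorithm~\ref{alg:groeb}, and the linchpin is the case analysis on $\LM(g_2)$ versus $\LM(g_1)$, which exploits the minimality of $(h, \mathfrak{h})$ together with the presence of $(1, \mathfrak{N})$ in $G$ to rule out a degenerate drop in leading monomial.
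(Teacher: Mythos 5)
Your proposal is correct and follows the route the paper intends: correctness is reduced to Proposition~\ref{prop:red} together with Corollary~\ref{cor:buchcrit}, exactly as in the one-sentence justification preceding the corollary in the paper. The genuinely valuable addition in your write-up is the termination analysis, which the paper leaves entirely implicit: a priori the replacement in step (a') could destroy the strict growth of $\Lt(G)$ that drives termination of Algorithm~\ref{alg:groeb}, and your case distinction on $\LM(g_2)$ versus $\LM(g_1)$ --- using $\Lt(g_1,\mathfrak g_1)=\Lt(h,\mathfrak h)\not\subseteq\Lt(G)$ from minimality and Lemma~\ref{lem:3}, and the presence of $(1,\mathfrak N)$ in $G$ to exclude a drop in leading monomial --- settles this correctly. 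The only degenerate case you pass over silently is $g_2=0$ with $h\neq 0$; there nothing is appended, but then all coefficients of $g_1$ lie in $\mathfrak N\mathfrak g_1^{-1}$, so the $S$-polynomial already reduces to $0$ modulo $(1,\mathfrak N)\in G$, and both termination and the Buchberger criterion are unaffected.
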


\subsection{The case of rings of integers.}

It remains to describe how to use the previous results to bound the size of the intermediate pseudo-polynomials.
Since this question is meaningless in the general settings of Dedekind domains, we now restrict to the case where $R$ is the ring of integers of a finite number field $K/\mathbf Q$.
We assume that $I \subseteq R[x]$ is an ideal which contains non-zero ideal $\mathfrak N$ of $R$.
In view of Proposition~\ref{prop:red}, we want to solve the following two problems for a given non-zero pseudo-polynomial $(f, \mathfrak f)$ of $R[x]$.
\begin{enumerate}
  \item
    Find a pseudo-polynomial $(g, \mathfrak g)$ of $R[x]$ with $\mathfrak g$ small such that $\mathfrak f f = \mathfrak g g$.
  \item
    Find a pseudo-polynomial $(g, \mathfrak f)$ of $R[x]$, such that $g$ has small coefficients, every monomial of $g$ is a monomial of $f$, and $f - g \in \mathfrak N \mathfrak f^{-1}[x]$.
\end{enumerate}

We will now translate this problem to the setting of pseudo-elements in
projective $R$-modules of finite rank, where the analogous problems
are already solved in the context of generalized Hermite form algorithms.
To this end, let $f = \sum_{1 \leq i \leq d} c_{\alpha_i} x^{\alpha_i}$, $c_{\alpha_i} \neq 0$, and consider
\[ \pi \colon K[x] \longrightarrow K^d, \, \sum_{\alpha \in \N^n} c_\alpha x^{\alpha} \longmapsto (c_{\alpha_i})_{1 \leq i \leq d}, \quad \iota \colon K^d \longrightarrow K[x], \, (c_{\alpha_i})_{1 \leq i \leq d} \longmapsto \sum_{i=1}^d c_{\alpha_i} x^{\alpha_i} . \]
Using these $K$-linear maps, we can think of pseudo-polynomials having the same
support as $f$ as projective $R$-submodules of $V$ of rank one, that is, as pseudo-elements in $K^d$.
Moreover, if $\mathfrak f \pi(f) = \mathfrak g w$ for some $w \in K^d$ and
fractional ideal $\mathfrak g$ of $R$, then $\mathfrak f f =
\mathfrak g \iota(w)$.
In particular, by setting $v = (v_i)_{1 \leq i \leq d} = \pi(f) \in K^d$, problems~(i) and~(ii) are equivalent to the following two number theoretic problems:
\begin{enumerate}
  \item[(i')] Find a pseudo-element $(w, \mathfrak g)$ of $K^d$ with $\mathfrak g$ small such that $\mathfrak fv = \mathfrak gw$.
  \item[(ii')] Find a pseudo-element $(w, \mathfrak f)$ of $K^d$, such that $w_i$ is small and $v_i - w_i \in \mathfrak N\mathfrak f^{-1}$ for all $1 \leq i \leq d$.
\end{enumerate}

Hence, we can reduce pseudo-polynomials by applying the following two
algorithms to the coefficient ideal and the coefficients respectively.
Both are standard tools in algorithmic algebraic number theory, see \cite{Biasse2017} for a discussion including a complexity analysis.

\begin{lemma}
  Let $\mathfrak N$ be a non-zero ideal of $R$.
  \begin{enumerate}
    \item
      There exists an algorithm, that given a fractional ideal $\mathfrak a$ of $R$ and a vector $v \in K^d$ determines an ideal $\mathfrak b$ of $R$ and a vector $w \in K^d$ such that $\mathfrak a v = \mathfrak b w$ and the norm $\#(R/\mathfrak b)$ can be bounded by a constant that depends only on the field $K$ (and not on $\mathfrak a$ or $v$).
    \item
      There exists an algorithm, that given a non-zero ideal $\mathfrak f$ of $R$ and an element $\alpha$ of $K$, determines an element $\beta \in K$ such that $\alpha - \beta \in \mathfrak N\mathfrak f^{-1}$ and the size of $\beta$ can be bounded by a constant that depends only on the field $K$ and the norms $\#(R/\mathfrak N)$, $\#(R/\mathfrak f)$.
  \end{enumerate}
\end{lemma}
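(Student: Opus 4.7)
The plan is to translate both parts into well-known lattice problems in $K\otimes_{\mathbf Q}\mathbf R$ via the Minkowski embedding, and invoke the classical algorithms from algorithmic number theory.

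For part (i), I would first observe that if $v \neq 0$, every equation of the form $\mathfrak a v = \mathfrak b w$ with $w \in K^d$ arises as $\mathfrak b = \lambda \mathfrak a$ and $w = \lambda^{-1} v$ for some $\lambda \in K^\times$. Thus the problem reduces to the following purely ideal-theoretic question: given a fractional ideal $\mathfrak a$ of $R$, find $\lambda \in K^\times$ such that $\mathfrak b := \lambda \mathfrak a$ is integral with $\#(R/\mathfrak b)$ bounded by a constant depending only on $K$. The condition $\mathfrak b \subseteq R$ forces $\lambda \in \mathfrak a^{-1}$, and $\#(R/\mathfrak b) = |N_{K/\mathbf Q}(\lambda)| \cdot N(\mathfrak a)$. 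Viewing $\mathfrak a^{-1}$ via the Minkowski embedding as a full rank $\mathbf Z$-lattice in $K\otimes_{\mathbf Q}\mathbf R$ of covolume proportional to $\sqrt{|d_K|}\,N(\mathfrak a)^{-1}$, Minkowski's convex body theorem guarantees the existence of a nonzero $\lambda \in \mathfrak a^{-1}$ with $|N_{K/\mathbf Q}(\lambda)| \leq M_K \cdot N(\mathfrak a^{-1})$, where $M_K$ is the Minkowski constant of $K$. Such a $\lambda$ is computed in practice by applying the LLL algorithm to a $\mathbf Z$-basis of $\mathfrak a^{-1}$ and extracting a short vector; the resulting $\mathfrak b = \lambda \mathfrak a$ then satisfies $\#(R/\mathfrak b) \leq c_K$ for a constant depending only on $K$.

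For part (ii), the congruence $\alpha - \beta \in \mathfrak N\mathfrak f^{-1}$ is exactly the problem of reducing the element $\alpha$ modulo the fractional ideal $\mathfrak N\mathfrak f^{-1}$, which corresponds under the Minkowski embedding to solving a closest-vector problem in a rank $[K:\mathbf Q]$ lattice with covolume proportional to $\sqrt{|d_K|}\,N(\mathfrak N\mathfrak f^{-1}) = \sqrt{|d_K|}\,\#(R/\mathfrak N)/\#(R/\mathfrak f)$. One first LLL-reduces a $\mathbf Z$-basis of $\mathfrak N\mathfrak f^{-1}$ and then applies Babai's nearest-plane procedure to the target vector $\alpha$ to obtain $\beta$ with $\alpha - \beta \in \mathfrak N\mathfrak f^{-1}$ and $\beta$ lying in the fundamental parallelotope. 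The size of $\beta$ is controlled by the sum of the reduced basis vectors, which by Minkowski's second theorem is bounded by a quantity depending only on the rank, the covolume, and $K$-dependent constants. This yields the asserted bound in terms of $K$, $\#(R/\mathfrak N)$ and $\#(R/\mathfrak f)$.

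The main obstacle is not any new mathematical content but rather the clean bookkeeping of how the Minkowski embedding intertwines the ring-theoretic data (ideal norms) with the lattice-theoretic data (covolume and successive minima), and the invocation of the correct standard algorithms. Both ideal reduction and CVP-in-number-fields are explicitly described and analyzed in the references \cite{Cohen1996, Cohen2000} and in \cite{Biasse2017}, so the proof essentially consists of pointing to those algorithms, verifying that their output satisfies the stated equalities ($\mathfrak a v = \mathfrak b w$ and $\alpha - \beta \in \mathfrak N\mathfrak f^{-1}$), and checking that the quantitative bounds have the claimed form of dependence.
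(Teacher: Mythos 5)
Your proposal is correct and matches the paper's intent exactly: the paper gives no proof of this lemma at all, simply declaring both algorithms to be ``standard tools in algorithmic algebraic number theory'' and citing \cite{Biasse2017}, and your sketch (reduce (i) to finding a short $\lambda\in\mathfrak a^{-1}$ via Minkowski/LLL so that $\mathfrak b=\lambda\mathfrak a$ has bounded norm, and reduce (ii) to a close-vector computation for the lattice $\mathfrak N\mathfrak f^{-1}$ under the Minkowski embedding) is precisely the standard argument those references contain. No discrepancy to report.
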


\begin{remark}
  Recall that $\mathfrak N$ is a non-zero ideal of $R$ such that $\mathfrak N \subseteq I$, where $I$ is the ideal of $R[x]$ for which we want to find a pseudo-Gröbner basis.
  The preceding discussion together with Corollary~\ref{cor:red} implies that
  during Buchberger's algorithm (Algorithm~\ref{alg:groeb}), we can reduce the intermediate results so that
the size of all pseudo-polynomials is bounded by a constant depending only on
$\mathfrak N$ and $K$.
\end{remark}

\begin{remark}
  Assume that $I \subseteq R[x]$ is an ideal. Then there exists a non-zero ideal $\mathfrak N$ of $R$ contained in $I$ if and only if $K[x] = \langle I \rangle_{K[x]}$.
  In case, one can proceed as follows to find such an ideal $\mathfrak N$.
  Let $F = (f_i, \mathfrak f_i)_{1 \leq i \leq l}$ be a generating set of pseudo-polynomials of $I$.
  Using classical Gröbner basis computations and the fact that $1 \in \langle I \rangle_{K[x]}$
  we can compute $a_i \in K$, $1 \leq i \leq l$, such that $1 = \sum_{1 \leq i \leq l} a_i f_i$.
  Next we determine $d \in R$ such that $da_i \in \mathfrak f_i$ for all $1 \leq i \leq l$.
  Then
  \[ d = \sum_{i=1}^l da_if_i \in \sum_{i=1}^l \mathfrak f_i f_i \subseteq I \]
  and thus the non-zero ideal $\mathfrak N = dR$ satisfies $\mathfrak N \subseteq I$.
\end{remark}

\section{Applications}

We give a few applications of pseudo-Gröbner bases to classical problems from
algorithmic commutative algebra as well as to the problem of computing primes of bad reduction.

\subsection{Ideal membership and intersections}

\begin{prop}
  Let $I$ be an ideal of $R[x]$ given by a finite generating set of non-zero (pseudo-)polynomials.
  There exists an algorithm, that given a polynomial $f$ respectively a pseudo-polynomial $(f, \mathfrak f)$ decides whether $f \in I$ respectively $\langle (f, \mathfrak f) \rangle \subseteq I$.
\end{prop}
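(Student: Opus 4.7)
The plan is to reduce ideal membership to the existence and computability of a pseudo-Gröbner basis, together with the algorithmic tractability of the reduction relation. First I would compute a pseudo-Gröbner basis $G = \{(g_i,\mathfrak g_i) \mid 1 \leq i \leq l\}$ of $I$ by running Algorithm~\ref{alg:groeb} on the given generating set (if the input is a family of ordinary polynomials, we view each $f$ as the pseudo-polynomial $(f,R)$; if it is a family of pseudo-polynomials, we pass them in directly). Correctness and termination of Algorithm~\ref{alg:groeb} have already been established. The membership test then proceeds by attempting to reduce the input pseudo-polynomial to zero modulo $G$, invoking Theorem~\ref{thm:defgroeb}~(ii): we have $\langle(f,\mathfrak f)\rangle\subseteq I$ if and only if $(f,\mathfrak f)\xrightarrow{G}_+(0,\mathfrak f)$. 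An ordinary polynomial $f$ is handled as the pseudo-polynomial $(f,R)$, for which $\langle(f,R)\rangle = \langle f\rangle_{R[x]} \subseteq I$ is equivalent to $f \in I$.

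Next I would spell out how to execute one reduction step algorithmically. Given the current pseudo-polynomial $(h,\mathfrak f)$, form the set $J = \{i \mid \LM(g_i)\text{ divides }\LM(h)\}$, which is a finite combinatorial computation. By Lemma~\ref{lem:red}, $(h,\mathfrak f)$ can be reduced modulo $G$ iff $\LC(h) \in \sum_{i\in J}\mathfrak g_i\mathfrak f^{-1}\LC(g_i)$, and in that case one needs coefficients $a_i\in\mathfrak g_i\mathfrak f^{-1}$ with $\LC(h)=\sum_{i\in J}a_i\LC(g_i)$. Both the containment test and the explicit representation are instances of the standard problem of testing membership and computing representations in a finite sum of fractional ideals of $R$, which is part of the basic algorithmic toolkit for Dedekind domains. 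Applying the resulting one step reduction produces a new pseudo-polynomial with strictly smaller leading monomial by Remark~\ref{rem:1}. Iterating until either the polynomial becomes $0$ or no further reduction is possible terminates because $<$ is a well-ordering on monomials.

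The answer is then read off: if the reduction chain reaches $(0,\mathfrak f)$, we report $\langle(f,\mathfrak f)\rangle\subseteq I$; if it terminates at a non-zero pseudo-polynomial $(h,\mathfrak f)$ which is minimal with respect to $G$, then by Lemma~\ref{lem:3} we have $\Lt(h,\mathfrak f)\not\subseteq\Lt(G)=\Lt(I)$, so $\langle(h,\mathfrak f)\rangle\not\subseteq I$, and by Lemma~\ref{lem:2} the same holds for $(f,\mathfrak f)$, so we report non-membership.

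The main obstacle is not conceptual but implementational: the whole procedure rests on the availability of constructive ideal arithmetic in $R$ (products, sums, intersections, inverses, and in particular the ability to test membership $c \in \sum_i \mathfrak b_i c_i$ and to produce explicit preimages). Over a Dedekind domain presented in the usual algorithmic way---most importantly, over the ring of integers of a number field as in Section~5---these operations are standard, so the algorithm is effective in any reasonable computational model for $R$.
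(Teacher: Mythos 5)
Your proposal is correct and follows essentially the same route as the paper: reduce the ordinary-polynomial case to $(f,R)$, compute a pseudo-Gröbner basis via Algorithm~\ref{alg:groeb}, and decide membership by Theorem~\ref{thm:defgroeb}~(ii). The extra detail you supply on executing and terminating the reduction steps (via Lemma~\ref{lem:red} and ideal arithmetic in $R$) is a sound elaboration of what the paper leaves implicit.
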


\begin{proof}
  Since $f \in I$ if and only if $\langle (f, R)\rangle \subseteq I$, we can restrict to the case of pseudo-polynomials. 
  After computing a pseudo-Gröbner basis of $I$ using Algorithm~\ref{alg:groeb}, we can use Theorem~\ref{thm:defgroeb}~(ii) to decide membership.
\end{proof}

Next we consider intersections of ideals, where as in the case of fields we use an elimination ordering.

\begin{prop}\label{prop:intersect}
  Consider $R[x, y]$ with elimination order with the $y$ variables larger than the $x$ variables.
  Let $G = \{(g_i,\mathfrak g_i) \mid 1 \leq i \leq l\}$ be a pseudo-Gröbner basis of an ideal $I \subseteq R[x, y]$.
  Then $\{ (g_i, \mathfrak g_i) \mid g_i \in K[x] \}$
  is a pseudo-Gröbner basis of $I \cap R[x]$.
\end{prop}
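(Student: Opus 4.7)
The plan is to verify condition (ii) of Theorem~\ref{thm:defgroeb} for the candidate set $G' = \{(g_i, \mathfrak{g}_i) \mid g_i \in K[x]\}$ viewed as a family of pseudo-polynomials of the ideal $I \cap R[x]$ in $R[x]$. First I would note that each element of $G'$ is indeed a pseudo-polynomial of $I \cap R[x]$, since $(g_i, \mathfrak{g}_i) \in G \subseteq I$ and by construction $g_i \in K[x]$, so $\mathfrak{g}_i g_i \subseteq R[x] \cap I$.

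The key structural observation, which is the heart of the argument, is the following property of the elimination ordering: if $h \in K[x, y] \setminus K[x]$, then $\LM(h)$ must involve at least one $y$-variable, simply because any monomial containing a $y$-variable is larger than any monomial lying purely in the $x$-variables. Equivalently, $\LM(h) \in \Mon(x)$ forces $h \in K[x]$.

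For the main direction, I would take an arbitrary pseudo-polynomial $(f, \mathfrak{f})$ of $R[x]$ with $\langle (f, \mathfrak{f})\rangle \subseteq I \cap R[x]$. Since in particular $\langle (f, \mathfrak{f})\rangle \subseteq I$, Theorem~\ref{thm:defgroeb}~(ii) applied to the pseudo-Gröbner basis $G$ of $I$ yields a reduction chain $(f, \mathfrak{f}) \xrightarrow{G}_+ 0$ in $R[x,y]$. I would then argue inductively that every intermediate polynomial in this chain lies in $R[x]$ and that each reduction step only uses elements of $G'$. Indeed, assuming the current $f_k$ lies in $R[x]$, its leading monomial involves only $x$-variables; any $(g_i, \mathfrak{g}_i) \in G$ used in the next reduction step satisfies $\LM(g_i) \mid \LM(f_k)$, hence $\LM(g_i) \in \Mon(x)$, and by the key observation $g_i \in K[x]$. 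The resulting one step reduction subtracts terms of the form $a_i x^{\alpha} g_i$ with $g_i \in R[x]$ from $f_k \in R[x]$, so the next polynomial in the chain still lies in $R[x]$. This gives a chain $(f, \mathfrak{f}) \xrightarrow{G'}_+ 0$ entirely within $R[x]$.

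For the converse, if $(f, \mathfrak{f}) \xrightarrow{G'}_+ 0$ then Lemma~\ref{lem:2} forces $\langle (f, \mathfrak{f})\rangle \subseteq \langle G' \rangle_{R[x]} \subseteq I \cap R[x]$, the latter inclusion being immediate from $G' \subseteq G \subseteq I$ and $G' \subseteq R[x]$. I do not expect a real obstacle here: the only subtle point is the bookkeeping that the reduction chain inherited from $G$ never ventures outside $R[x]$ and never invokes a generator outside $G'$, which is precisely the combinatorial consequence of the elimination order just described.
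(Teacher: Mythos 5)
Your argument is correct, but it takes a genuinely different route from the paper. The paper's proof is a two-line reduction: it invokes Theorem~\ref{thm:defgroeb}~(iv) to replace the pseudo-Gröbner basis $G$ by the ordinary Gröbner basis $\{a_{ij}g_i\}$ built from generators $a_{ij}$ of the coefficient ideals $\mathfrak g_i$, applies the classical elimination theorem for Gröbner bases over Noetherian rings (\cite[Theorem 4.3.6]{Adams1994}) to conclude that the $a_{ij}g_i$ with $g_i \in K[x]$ form a Gröbner basis of $I \cap R[x]$, and then translates back via (iv); note that $a_{ij}g_i \in R[x]$ exactly when $g_i \in K[x]$, so the two selections match. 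You instead verify characterization (ii) directly, tracking the reduction chain step by step. Your key points are sound: under the elimination order a polynomial with leading monomial in $\Mon(x)$ lies in $K[x]$; the index set $J$ of a one step reduction consists only of $i$ with $\LM(g_i) \mid \LM(f_k)$, so when $f_k \in K[x]$ every generator actually used belongs to $G'$ and the next polynomial stays in $K[x]$ (with $\mathfrak f f_{k+1} \subseteq R[x,y] \cap K[x] = R[x]$); and the converse follows from Lemma~\ref{lem:2} together with $\mathfrak g_i[x]g_i \subseteq I \cap R[x]$ for $g_i \in K[x]$. What your approach buys is a self-contained argument entirely inside the pseudo-polynomial framework, in effect reproving the elimination theorem rather than citing it; what it costs is length, and it silently re-establishes the combinatorial core of \cite[Theorem 4.3.6]{Adams1994} that the paper simply imports. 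Both are valid proofs.
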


\begin{proof}
  Follows from Theorem~\ref{thm:defgroeb}~(iv) and the corresponding result for Gröbner bases, see \cite[Theorem 4.3.6]{Adams1994}.
\end{proof}

\begin{corollary}
  Let $I$, $J$ be two ideals of $R[x]$ given by finite generating sets of non-zero (pseudo-)polynomials.
  Then there exists an algorithm that computes a finite generating set of pseudo-polynomials of $I \cap J$.
\end{corollary}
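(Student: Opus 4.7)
The plan is to adapt the classical auxiliary-variable trick for intersections of ideals to the pseudo-polynomial setting, and then appeal to Proposition~\ref{prop:intersect}. Let $(f_i, \mathfrak f_i)_{1 \leq i \leq r}$ be a generating set of pseudo-polynomials of $I$ and $(g_j, \mathfrak g_j)_{1 \leq j \leq s}$ one for $J$; an ordinary polynomial $f$ is read as the pseudo-polynomial $(f, R)$. Introduce a fresh variable $t$ and consider the ideal
\[ H = \sum_{i=1}^r \mathfrak f_i[x, t] \cdot (tf_i) \;+\; \sum_{j=1}^s \mathfrak g_j[x, t] \cdot ((1-t)g_j) \subseteq R[x, t], \]
that is, the ideal in $R[x, t]$ generated by the pseudo-polynomials $(t f_i, \mathfrak f_i)$ and $((1-t) g_j, \mathfrak g_j)$.

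The heart of the argument is the identity $H \cap R[x] = I \cap J$. For the inclusion $\subseteq$, an element $h \in H \cap R[x]$ admits a representation $h = \sum_i a_i (t f_i) + \sum_j b_j ((1-t) g_j)$ with $a_i \in \mathfrak f_i[x, t]$ and $b_j \in \mathfrak g_j[x, t]$; substituting $t = 1$ in this identity yields $h = \sum_i a_i(x, 1) f_i$ with $a_i(x, 1) \in \mathfrak f_i[x]$, so $h \in I$, and substituting $t = 0$ yields $h \in J$ analogously. For the reverse inclusion, given $h \in I \cap J$ choose representations $h = \sum_i a_i f_i$ with $a_i \in \mathfrak f_i[x]$ and $h = \sum_j b_j g_j$ with $b_j \in \mathfrak g_j[x]$, and use $h = t \cdot h + (1-t) \cdot h = \sum_i a_i (tf_i) + \sum_j b_j ((1-t)g_j)$, which lies in $H$ because $a_i \in \mathfrak f_i[x] \subseteq \mathfrak f_i[x, t]$ and $b_j \in \mathfrak g_j[x] \subseteq \mathfrak g_j[x, t]$.

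Once this identity is established, the algorithm is immediate: apply Algorithm~\ref{alg:groeb} to the pseudo-polynomials $\{(tf_i, \mathfrak f_i)\}_i \cup \{((1-t)g_j, \mathfrak g_j)\}_j$ in $R[x, t]$ with respect to an elimination order in which $t$ is larger than every $x_i$, obtaining a pseudo-Gröbner basis of $H$. By Proposition~\ref{prop:intersect} the subset consisting of those pseudo-polynomials whose polynomial part lies in $K[x]$ is a pseudo-Gröbner basis of $H \cap R[x]$, hence of $I \cap J$. In particular it gives the desired finite generating set of pseudo-polynomials.

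The only delicate point is the coefficient-ideal bookkeeping in the verification of $H \cap R[x] = I \cap J$; beyond that everything is a direct transcription of the classical recipe, with Algorithm~\ref{alg:groeb} providing the pseudo-Gröbner computation and Proposition~\ref{prop:intersect} providing the elimination step.
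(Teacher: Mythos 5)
Your proposal is correct and follows exactly the paper's route: the paper likewise reduces to the identity $I \cap J = \langle wI, (1-w)J \rangle_{R[x,w]} \cap R[x]$ (citing it as a classical fact) and then applies Proposition~\ref{prop:intersect} to a pseudo-Gröbner basis computed with an elimination order. The only difference is that you spell out the substitution argument and the coefficient-ideal bookkeeping that the paper leaves to the reference, and both directions of your verification are sound.
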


\begin{proof}
  This follows from Proposition~\ref{prop:intersect} and the classical fact that
  $I \cap J = \langle wI, (1 - w)J \rangle_{R[x, w]} \cap R[x]$,
  where $w$ is an additional variable (see~\cite[Proposition 4.3.9]{Adams1994}).
\end{proof}

\begin{corollary}\label{coro:intersectR}
  Let $I \subseteq R[x]$ be an ideal.
  Then there exists an algorithm for computing $I \cap R$. 
\end{corollary}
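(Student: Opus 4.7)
The plan is to reduce the problem to Proposition~\ref{prop:intersect} by treating $R$ as the extreme case of elimination where \emph{all} polynomial variables are eliminated. First I would compute a pseudo-Gröbner basis $G = \{(g_i,\mathfrak g_i) \mid 1 \leq i \leq l\}$ of $I$ using Algorithm~\ref{alg:groeb} with respect to a monomial ordering on $R[x_1,\dotsc,x_n]$ under which every non-constant monomial is larger than $1$; for example, the pure lexicographic order will do.

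Next I would extract the subfamily $G' = \{(g_i,\mathfrak g_i) \in G \mid g_i \in K\}$ of pseudo-polynomials whose polynomial component is a constant. Applying Proposition~\ref{prop:intersect} (viewing $R = R[\emptyset]$ as the subring with no variables, and all $x_j$ as the eliminated variables $y$) shows that $G'$ is a pseudo-Gröbner basis of $I \cap R$, now considered as an $R$-ideal. Consequently
\[ I \cap R = \sum_{(g_i,\mathfrak g_i) \in G'} \mathfrak g_i g_i, \]
which expresses $I \cap R$ as an explicit finite sum of fractional ideals of $R$, all of whose generators can be written down from the output of Algorithm~\ref{alg:groeb}.

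The only delicate point is the degenerate case of Proposition~\ref{prop:intersect} where the retained variable set is empty; but the proof of that proposition only uses the leading-term characterization of Theorem~\ref{thm:defgroeb}~(iv) together with the elimination property of Gröbner bases (\cite[Theorem 4.3.6]{Adams1994}), both of which apply verbatim in this degenerate situation. Hence the procedure terminates and returns a correct generating set for $I \cap R$, which establishes the claimed algorithm.
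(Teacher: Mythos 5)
Your proposal is correct and matches the approach the paper intends: the corollary is meant to follow from Proposition~\ref{prop:intersect} by eliminating all the variables, and the paper itself uses exactly your formula $I \cap R = \sum \mathfrak g_i g_i$ (summed over the $g_i \in K$) in the subsequent corollary on primes of bad reduction. Your handling of the degenerate "no retained variables" case is sound (indeed, since every monomial ordering already places $1$ below every non-constant monomial, any ordering serves as an elimination order here), so nothing further is needed.
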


%

\subsection{Primes of bad reduction}

It seems to be well known, that in the case where $R$ is $\Z$, the primes of bad reduction of a variety can be determined by computing Gröbner bases of ideals corresponding to singular loci.
Due to the lack of references we give a proof of this folklore result and show how it relates to pseudo-Gröbner bases.
Assume that $X \subseteq
\PP_{R}^n$ is a subscheme which is flat over $\Spec(R)$, has smooth generic fiber $X_K$ and is pure of dimension $k$.
Our aim is to determine the primes of bad reduction of $X$, that is, we want to find all points $\mathfrak p \in \Spec(R)$ such that the special fiber $X_{\mathfrak p}$ is not smooth.
By passing to an affine cover, we may assume that $X$ is a closed subscheme $V(f_1,\dotsc,f_l)$ of $\mathbf A_R^n$, where $f_1,\dotsc,f_l \in R[x]$.
Let $\mathfrak p \in \Spec(R)$, $\mathfrak p \neq 0$ and denote by $k_{\mathfrak p} = R/\mathfrak p$ the residue field.
Let $J = (\frac{\partial f_i}{\partial x_j})_{1\leq i \leq l, 1 \leq j \leq n}$ be the Jacobian matrix.

\begin{theorem}
  Let $X = V(f_1,\dotsc,f_l)$ and $I$ the ideal of $R[x]$ generated by $f_1,\dotsc,f_l$ and the $(n-k)$ minors of $J$.
  Then $X_\mathfrak p \subseteq \mathbf A_{k_\mathfrak p}^n$ is smooth if and only if $\mathfrak p$ does not divide $I \cap R$.
\end{theorem}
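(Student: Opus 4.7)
The plan is to split the statement into two equivalences via the auxiliary condition $\bar I = k_\mathfrak p[x]$, where $\bar I \subseteq k_\mathfrak p[x]$ denotes the reduction of $I$ modulo $\mathfrak p R[x]$. First I will show that $X_\mathfrak p$ is smooth if and only if $\bar I = k_\mathfrak p[x]$, and second that $\bar I = k_\mathfrak p[x]$ if and only if $I \cap R \not\subseteq \mathfrak p$. Since $R$ is a Dedekind domain, the latter condition is equivalent to $\mathfrak p \nmid I \cap R$, so the theorem follows.

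The first equivalence rests on the Jacobian criterion. Flatness of $X \to \Spec(R)$ and purity of $X_K$ of dimension $k$ imply that every non-empty fiber is pure of dimension $k$; if $X_\mathfrak p$ is already empty, then $1 \in (\bar f_1,\dotsc,\bar f_l) \subseteq \bar I$ directly and $X_\mathfrak p$ is vacuously smooth. Otherwise, by the Jacobian criterion applied over $k_\mathfrak p$ (after base change to $\bar k_\mathfrak p$ to reduce to closed points, or equivalently using Jacobson-ness of $\mathbf A_{k_\mathfrak p}^n$), $X_\mathfrak p$ is smooth if and only if at every point at least one $(n-k)\times(n-k)$ minor of the reduced Jacobian $\bar J$ is non-zero, if and only if the closed subscheme $V(\bar I) \subseteq \mathbf A_{k_\mathfrak p}^n$ is empty, if and only if $\bar I = k_\mathfrak p[x]$ by the Nullstellensatz.

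The second equivalence is immediate in one direction: any $r \in (I\cap R)\setminus \mathfrak p$ reduces to a unit in the field $k_\mathfrak p$ and already lies in $\bar I$. For the converse I would use that the smoothness of $X_K$ implies $V(I)_K = \emptyset$, hence $IK[x] = K[x]$; clearing denominators in a representation of $1$ produces a non-zero element of $I \cap R$. Therefore $R/(I \cap R)$ is a non-zero Artinian ring and decomposes as the finite product $\prod_{\mathfrak q \in V(I \cap R)} R_\mathfrak q/(I \cap R)R_\mathfrak q$ of local Artinian rings. The injection $R/(I \cap R) \hookrightarrow R[x]/I$ then yields, via the corresponding idempotent decomposition of $R[x]/I$, a non-zero factor over each $\mathfrak q \in V(I \cap R)$, and consequently a prime $\mathfrak P$ of $R[x]$ containing $I$ with $\mathfrak P \cap R = \mathfrak q$; for $\mathfrak q = \mathfrak p$ this exhibits a point in the fiber $V(\bar I)$, so $\bar I \neq k_\mathfrak p[x]$.

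The main obstacle is this last implication: a priori the set-theoretic image of $V(I) \to \Spec(R)$ can be strictly smaller than the scheme-theoretic image $V(I \cap R)$ (for instance already in $V(2x+1) \to \Spec(\Z)$, where the image misses $(2)$), and the Artinian decomposition of $R/(I \cap R)$---which is forced by the triviality of the generic fiber of $V(I)$, hence by the smoothness of $X_K$---is precisely what rules out this pathology.
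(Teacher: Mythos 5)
Your proof is correct, and its skeleton matches the paper's: the Jacobian criterion (plus flatness to pin down the fiber dimension) reduces smoothness of $X_{\mathfrak p}$ to $(I,\mathfrak p)=R[x]$, and one then relates this to $I\cap R$. Where you genuinely diverge is in the second step, and your version is the more complete one. The paper takes a maximal ideal $M\supseteq(I,\mathfrak p)$, notes $M\cap R=\mathfrak p$, and then passes from ``$(I,\mathfrak p)\cap R=\mathfrak p$'' to ``$I\cap R\subseteq\mathfrak p$'' with a ``that is''; only the forward implication of that rephrasing is automatic. The converse --- that $I\cap R\subseteq\mathfrak p$ forces $(I,\mathfrak p)\neq R[x]$ --- is exactly the pathology you flag with $(2x+1)\subseteq\Z[x]$ and $\mathfrak p=(2)$, where the contraction is contained in $\mathfrak p$ yet the fiber over $\mathfrak p$ is empty. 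Your repair is sound: smoothness of $X_K$ gives $IK[x]=K[x]$, hence $0\neq I\cap R$, hence $R/(I\cap R)$ is Artinian, and transporting its idempotent decomposition through the injection $R/(I\cap R)\hookrightarrow R[x]/I$ produces, for each $\mathfrak q\supseteq I\cap R$, a non-zero factor of $R[x]/I$ and thus a maximal ideal of $R[x]$ containing $I$ and lying over $\mathfrak q$; this is precisely the step the paper's proof elides. (A marginally shorter phrasing of the same point: $R[x]/I$ is an algebra over the Artinian ring $R/(I\cap R)$, so it splits along the primes dividing $I\cap R$, and the factor over $\mathfrak p$ is non-zero because it contains the non-zero local ring $R_{\mathfrak p}/(I\cap R)R_{\mathfrak p}$; any maximal ideal of that factor contracts to $\mathfrak p$.) So: same overall route, but your treatment of the implication ``$\mathfrak p\mid I\cap R\Rightarrow X_{\mathfrak p}$ singular'' supplies a justification, resting on the smoothness of the generic fiber, that the paper's argument only gestures at.
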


\begin{proof}
  The flatness condition implies that $X_\mathfrak p$ has dimension $k$. By the Jacobian criterion (\cite[Chapter 4, Theorem 2.14]{Liu2002}, $X_\mathfrak p$ is smooth if and only if $J_\mathfrak p(p)$ has rank $n - k$ for all $p \in X_\mathfrak p (\bar k_\mathfrak p)$, where $J_\mathfrak p = (\frac{\partial \bar f_i}{\partial x_j})_{1 \leq i \leq l, 1 \leq j \leq n}$ is the Jacobian of $\bar f_1,\dotsc,\bar f_l$.
  Thus $X_\mathfrak p$ is smooth if and only if the ideal of $k_\mathfrak p[x]$ generated by $\bar f_1,\dotsc,\bar f_l$ and the $(n - k)$-minors of $J_\mathfrak p$ is equal to $k_\mathfrak p[x]$.
  Hence $X_\mathfrak p$ is smooth if and only if the ideal $(I, \mathfrak p)$ of $R[x]$ is equal to $R[x]$.
  Now $(I, \mathfrak p) \subsetneq R[x]$ if and only if there exists a maximal ideal $M$ of $R[x]$ containing $(I, \mathfrak p)$. But in this case the kernel $R \cap M$ of the projection $R \to R[x]/M$ contains $\mathfrak p$ and must therefore be equal to $\mathfrak p$. As $\mathfrak p \subseteq (I, \mathfrak p) \cap R \subseteq M \cap R = \mathfrak p$, the existence of $M$ is equivalent to $(I, \mathfrak p) \cap R = \mathfrak p$, that is, $I \cap R \subseteq \mathfrak p$.
\end{proof}

Combining this with the previous subsection, the primes of bad reduction can be easily characterized with pseudo-Gröbner bases. Note that this does not determine the primes themselves, since one has to additionally determine the prime ideal factors.

\begin{corollary}
  Let $X = V(f_1,\dotsc,f_k)$ and $I$ the ideal of $R[x]$ generated by $f_1,\dotsc,f_l$ and the $(n-k)$ minors of the Jacobian matrix $J$.
  Let $\{ (g_i,\mathfrak g_i) \mid 1 \leq i \leq l\}$ be a pseudo-Gröbner basis of $I$ and $\mathfrak N = \sum \mathfrak g_i g_i \subseteq R$, where the sum is over all $1 \leq i \leq l$ such that $g_i \in K$. Then $\mathfrak p$ is a prime of bad reduction of $X$ if and only if $\mathfrak p$ divides $\mathfrak N$.
\end{corollary}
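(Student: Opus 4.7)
The plan is to combine the preceding theorem (which characterizes bad reduction as $\mathfrak p \mid I \cap R$) with the identification $I \cap R = \mathfrak N$. So the only content to establish is the equality of ideals $\mathfrak N = I \cap R$ in $R$; once that is in hand, $\mathfrak p$ is of bad reduction if and only if $\mathfrak p \supseteq I \cap R = \mathfrak N$, which for a nonzero prime in the Dedekind domain $R$ is equivalent to $\mathfrak p \mid \mathfrak N$.

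For the inclusion $\mathfrak N \subseteq I \cap R$, I would argue directly: each $(g_i, \mathfrak g_i)$ is a pseudo-polynomial of $I$, so $\mathfrak g_i g_i \subseteq I$. Whenever $g_i \in K$ (i.e. $g_i$ is a constant polynomial), the fact that $\mathfrak g_i g_i \subseteq R[x]$ forces $\mathfrak g_i g_i$ to lie in $R$, hence in $I \cap R$. Summing over the indices with $g_i \in K$ gives $\mathfrak N \subseteq I \cap R$.

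For the reverse inclusion I would use the defining property of a pseudo-Gröbner basis in the form of Theorem~\ref{thm:defgroeb}~(ii). Let $c \in I \cap R$ and consider the pseudo-polynomial $(c, R)$. Since $c \in I$ and $I$ is an ideal of $R[x]$, we have $\langle(c,R)\rangle = cR[x] \subseteq I$, so $(c,R)$ reduces to $0$ modulo $G$. The key observation is that $\LM(c) = 1$, so at every reduction step the index set $J = \{i \mid \LM(g_i) \text{ divides } \LM(c)\}$ consists exactly of those $i$ with $\LM(g_i) = 1$, i.e.\ with $g_i \in K$. A single reduction step then writes $c = \sum_{i \in J} a_i g_i$ with $a_i \in \mathfrak g_i R^{-1} = \mathfrak g_i$ and the resulting pseudo-polynomial is $(0, R)$. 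Hence $c \in \sum_{g_i \in K} \mathfrak g_i g_i = \mathfrak N$, as required.

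There is no real obstacle; the only point that needs care is the verification that in the reduction of a constant pseudo-polynomial, only the constant members of $G$ contribute, which is the analogue for this ``elimination over $R$'' of Proposition~\ref{prop:intersect}. Putting the two inclusions together with the previous theorem, we conclude that $\mathfrak p$ is a prime of bad reduction of $X$ precisely when $\mathfrak p$ divides $\mathfrak N$.
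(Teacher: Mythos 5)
Your proof is correct and follows essentially the same route the paper intends: combine the preceding theorem (bad reduction iff $\mathfrak p \mid I\cap R$) with the identity $\mathfrak N = I\cap R$. The paper gets that identity by pointing to the elimination results of the previous subsection, whereas you verify it directly from the reduction characterization of pseudo-Gröbner bases; your verification is sound, since $\LM(g_i)\mid 1$ forces $g_i$ to be a nonzero constant, and the first reduction step of $(c,R)$ already yields $c=\sum_{i\in J}a_i g_i$ with $a_i\in\mathfrak g_i$, hence $c\in\mathfrak N$.
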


\begin{example}
  To have a small non-trivial example, we look at an elliptic curve defined over a number field.
  Although there are other techniques to determine the primes of bad reduction, we will do so
  using pseudo-Gröbner bases.
  Consider the number field $K = \Q(\sqrt{10})$ with ring of integers $\mathcal O_K = \Z[a]$, where  $a = \sqrt{10}$. Let $E/K$ be the elliptic curve defined by 
  \[ f = y^2 - x^2 + (1728a+3348)x + (44928a-324432) \in K[x, y]. \]
  Note that this is a short Weierstrass equation of the elliptic curve with
  label \texttt{6.1-a2} from the LMFDB~(\cite{lmfdb}).
  To determine the places of bad reduction, we consider the ideal 
  \[ I = \left\langle f, \frac{\partial f}{\partial x}, \frac{\partial f}{\partial y} \right\rangle \subseteq \mathcal O_K[x, y]. \]
  Applying Algorithm~\ref{alg:groeb} we obtain a pseudo-Gröbner basis $G$, which together with Corollary~\ref{coro:intersectR} allows us to compute
  \[ I \cap R = \langle 940369969152, 437864693760a +  71663616\rangle \subseteq \mathcal O_K. \]
  The ideal $I \cap R$ has norm $67390312367240773632 = 2^{31} \cdot 3^{22}$ and factors as
  \[ I \cap R = \langle 2, a\rangle^{31} \cdot \langle 3, a + 2 \rangle^{15} \cdot \langle 3, a+4\rangle^7. \]
  Thus the primes of bad reduction are $\langle 2, a \rangle$, $\langle 3, a + 2 \rangle$ and $\langle 3, a + 4 \rangle$.
  Note that the conductor of $E$ is divisible only by $\langle 2, a \rangle$ and $\langle 3, a + 2 \rangle$ (the model we chose is not minimal at $\langle 3, a + 4 \rangle$).
  In fact this can be seen by determining the primes of bad reduction of the model $y^2 = x^3 + \frac 1 3(64a + 124)x + \frac 1{27}(1664a - 12016)$, which is minimal at $\langle 3, a + 4 \rangle$ (computed with \textsc{Magma} \cite{Bosma1997}).
\end{example}

\bibliography{pseudo_grobner}
\bibliographystyle{amsalpha}

\end{document}